\renewcommand{\c}{\mathsf{c}}
\theoremstyle{definition}
\newtheorem{theorem}{Theorem}[section]
\newtheorem{theoremx}{Theorem}
\newtheorem{corollaryx}{Corollary}
\numberwithin{equation}{section}
\newtheorem{corollary}[theorem]{Corollary}
\newtheorem{lemma}[theorem]{Lemma}
\newtheorem{notation}[theorem]{Notation}
\newtheorem*{claim*}{Claim}
\theoremstyle{definition}
\newtheorem{definition}[theorem]{Definition}
\newtheorem{example}[theorem]{Example}
\newtheorem{conjecture}[theorem]{Conjecture}
\newtheorem{remark}[theorem]{Remark}
\newcommand{\e}{\operatorname{e}}
\newtheoremstyle{TheoremNum}
        {8pt}{8pt}              
        {\upshape}                      
        {}                              
        {\bfseries}                     
        {.}                             
        {.5em}                             
        {\thmname{#1}\thmnote{ \bfseries #3}}
  \theoremstyle{TheoremNum}
\newcommand{\m}{\mathfrak{m}}
\newcommand{\NN}{\mathbb{N}}
\newcommand{\ZZ}{\mathbb{Z}}
\newcommand{\g}{\mathfrak{g}}
\newcommand{\Depth}{\operatorname{depth}}
\newcommand{\Ass}{\operatorname{Ass}}
\newcommand{\LL}{\mathds L}
\newcommand{\Char}{\operatorname{char}}
\newcommand{\Ht}{\operatorname{ht}}
\newcommand{\HF}{\operatorname{HF}}
\renewcommand{\b}{\mathfrak{b}}
\renewcommand{\d}{\mathbf{d}}
\newcommand{\ov}[1]{\overline{#1}}
\renewcommand{\a}{\mathfrak{a}}
\newcommand{\PP}{\mathbb{P}}
\renewcommand{\leq}{\leqslant}
\renewcommand{\geq}{\geqslant}
\newcommand{\Assh}{\operatorname{Assh}}
\newcommand{\f}{\mathfrak{f}}
\newcommand{\kk}{\Bbbk}
\newcommand{\soc}{\operatorname{soc}}
\newcommand{\rev}[1]{\operatorname{{\rm rev}_{#1}}}
\newcommand{\ale}[1]{{\color{red} \sf $\star$ Alessandro: [#1]}}
\newcommand{\EGH}[2]{{\rm EGH}_{{#1},{#2}}}
\newcommand{\x}{\mathsf{x}}
\title[A Cayley-Bacharach theorem for points in $\PP^n$]{A Cayley-Bacharach theorem for points in $\PP^n$}
\author{Giulio Caviglia}
\address{Department of Mathematics, Purdue University, 150 N. University Street, West Lafayette, IN 47907-2067, USA}
\email{gcavigli@purdue.edu}
\author{Alessandro De Stefani}
\address{Dipartimento di Matematica, Universit{\`a} di Genova, Via Dodecaneso 35, 16146 Genova, Italy}
\email{destefani@dima.unige.it}
\subjclass[2010]{Primary 14N05, 14H15, 13H15; Secondary 13A15, 13C40}
\keywords{Cayley-Bacharach theorems, EGH conjecture, almost complete intersection, Hilbert function, multiplicity, lex-plus-powers ideal}
\begin{document}

\begin{abstract}
We prove a Cayley-Bacharach-type theorem for points in projective space $\PP^n$ that lie on a complete intersection of $n$ hypersurfaces. This is made possible by new bounds on the growth of the Hilbert function of almost complete intersections. 

\end{abstract} 

\maketitle

\section{Introduction}

Let $\kk$ be an algebraically closed field. If $C_1$ and $C_2$ are two cubics in $\PP^2_\kk$ which meet in $9$ points, and $X$ is a cubic passing through $8$ points of $C_1 \cap C_2$, then $X$ contains the nineth point of $C_1 \cap C_2$ as well. This well-known statement extends Pappus's and Pascal's theorems, and it is one version of a series of results which are referred to as Cayley-Bacharach theorems. We refer the interested reader to the seminal work of Eisenbud, Green and Harris \cite{EGH_CB}, and to recent work of Kreuzer, Long and Robbiano \cite{KLR} for a detailed and fascinating history on the subject.

\begin{figure}[h]
\centering
\includegraphics[width=0.5\textwidth]{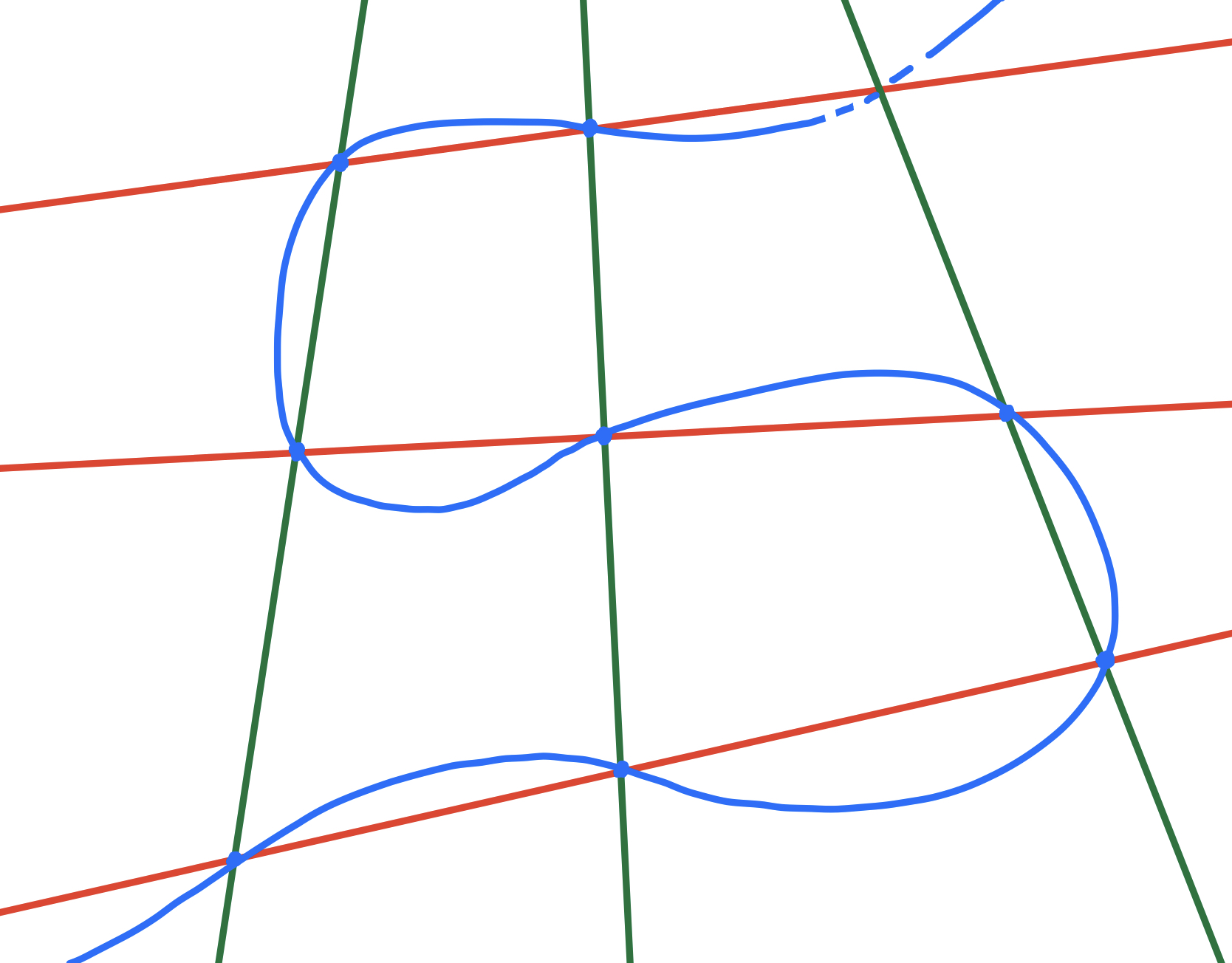}
\caption{A sketch of the case in which $C_1$ and $C_2$ are a union of three lines.}
\end{figure}

More generally, if $C_1,C_2 \subseteq \PP^2$ are two curves of degrees $d_1 \leq d_2$ meeting transversely in $d_1d_2$ points, the Cayley-Bacharach theorem states that, if a curve $X$ of degree $D=d_1+d_2-3$ passes through all but possibly one point of $C_1 \cap C_2$, then it must contain all $d_1d_2$ points of $C_1 \cap C_2$. In the literature, there have been several efforts to extend this theorem to a more general setup \cite{GH, Tan, Li, EL,KLR}. However, in most cases, the obtained results still require the hypersurface to pass through at least all but one point. In \cite{EGH_CB}, Eisenbud, Green and Harris suggest a different direction in which this theorem can be pushed. Namely, one can require $X$ to contain all but a given number of points of $C_1 \cap C_2$, balancing off this additional freedom by putting more restrictions on the degree of $X$. This leads to a new series of conjectured inequalities on multiplicities of almost complete intersections (see \cite[Conjecture CB12]{EGH_CB}). More specifically, \cite[Conjecture CB12]{EGH_CB} can be restated by saying that the multiplicity of an almost complete intersections is bounded above by the multiplicity of a special monomial almost complete intersection of the same degrees, which in Section \ref{Section Gorenstein} we denote $\LL(\d;D)$. By its nature, this upper bound is sharp, if true. In the literature, this improved version has been called the General Cayley-Bacharach conjecture (see \cite{GK}). However, in this article we will refer to the above as Cayley-Bacharach, since it is the only one we will focus on.


In $\PP^2$, \cite[Conjecture CB12]{EGH_CB} and hence the Cayley-Bacharach theorem follow from a stronger conjecture of Eisenbud, Green and Harris (henceforth EGH), see Conjecture \ref{Conjecture EGH}, which is known to be true in this case by \cite{Richert,Cooper}. 
In $\PP^3$, \cite[Conjecture CB12]{EGH_CB} and the Cayley-Bacharach theorem have been proved by Geramita and Kreuzer \cite[Corollary 4.4]{GK}. 

In Section \ref{Section Gorenstein}, we refine the Cayley-Bacharach inequality on multiplicities of almost complete intersections of height three, and we obtain the following upper bound on their Hilbert functions.  
\begin{theoremx}[see Theorem \ref{Theorem EGH ACI}]  \label{THMX B} Let $S = \kk[x_1,\ldots,x_n]$, and $\f=(f_1,f_2,f_3)$ be a complete intersection of degrees $\d=(d_1,d_2,d_3)$. Let $G$ be an element of degree $D \leq d_1+d_2+d_3-3$ such that $G \notin \f$, and $\a=\f+(G)$ has height three. Then $\HF(S/\a) \leq \HF(S/\LL(\d;D))$, where $\LL(\d;D) = (x_1^{d_1},x_2^{d_2},x_3^{d_3},U_D)$ and $U_D$ is the largest monomial with respect to the lexicographic order which has degree $D$ and does not belong to $(x_1^{d_1},x_2^{d_2},x_3^{d_3})$. 
\end{theoremx}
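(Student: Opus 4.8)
The plan is to reduce to the Artinian case in three variables and then exploit Gorenstein duality through linkage, converting the desired upper bound into an extremality (minimal Hilbert function) statement for a complete intersection.

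First I would reduce to $n=3$. Since $\f$ is a height-three complete intersection, $S/\f$ is Cohen--Macaulay of dimension $n-3$, so a general sequence of $n-3$ linear forms is regular on it and cuts it down to an Artinian Gorenstein ring $A=\kk[x_1,x_2,x_3]/(\bar f_1,\bar f_2,\bar f_3)$ of socle degree $s=d_1+d_2+d_3-3$; note that the hypothesis $D\leq d_1+d_2+d_3-3$ reads exactly as $D\leq s$. Because $\LL(\d;D)$ involves only $x_1,x_2,x_3$, its quotient is the Artinian monomial almost complete intersection $B=A_{\mathrm{mon}}/(\overline{U_D})$ extended by $\kk[x_4,\dots,x_n]$, so it suffices to prove the statement after Artinian reduction, namely to bound $\HF(A/\bar G A)$ by $\HF(B)$. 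The point requiring care is that $S/\a$ need not be Cohen--Macaulay, so passing to the $h$-vector is not automatic; I would handle this by a standard but careful comparison of Hilbert functions under a general Artinian reduction, using that $\LL(\d;D)$ is itself a polynomial extension of the Artinian model $B$.

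The core is a reformulation by linkage (we may assume $\a$ is minimally generated by four elements, the complete intersection case being easier). Writing $\b=\f:G$, the almost complete intersection $\a=\f+(G)$ is linked to $\b$ through the complete intersection $\f$, and since $\a$ is an almost complete intersection its link $S/\b$ is Gorenstein; by construction it is Artinian of socle degree $s-D\geq 0$. Working inside $A$, the principal ideal $\bar G A\cong (S/\b)(-D)$ yields the clean identity $\HF(A/\bar G A,i)=\HF(A,i)-\HF(S/\b,\,i-D)$. Thus the theorem is equivalent to a \emph{lower} bound on the Gorenstein Hilbert function $\HF(S/\b)$. The monomial model is transparent here: for the monomial choices one computes $(x_1^{d_1},x_2^{d_2},x_3^{d_3}):U_D=(x_1^{b_1},x_2^{b_2},x_3^{b_3})$ with $b_i=d_i-a_i$, where $U_D=x_1^{a_1}x_2^{a_2}x_3^{a_3}$, so the link of the extremal almost complete intersection is itself the complete intersection of degrees $(b_1,b_2,b_3)$.

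The heart of the proof is therefore to show that this complete intersection minimizes the Hilbert function among all codimension-three Gorenstein Artinian quotients of socle degree $s-D$ arising as such links; this is exactly the promised bound on the growth of the Hilbert function of almost complete intersections, read off on the linked side. I would prove it by a Macaulay/Clements--Lindstr\"om-type growth estimate in the complete intersection ring, combined with the Gorenstein symmetry $\HF(S/\b,j)=\HF(S/\b,s-D-j)$, which transfers an upper growth bound in high degrees into the required lower bound in low degrees, together with an induction on the degree vector $\d$ and on $D$: the lex maximality built into $U_D$, namely $a_1=\min(D,d_1-1)$, then $a_2=\min(D-a_1,d_2-1)$, and so on, corresponds precisely to making the linked complete intersection as concentrated, hence Hilbert-minimal, as possible. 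The main obstacle, and the step I expect to be genuinely hard, is establishing this extremality in the required generality, uniformly over all $\f$ and $G$, rather than for a single chosen $\a$; once it is in hand, reassembling $\HF(A/\bar G A,i)=\HF(A,i)-\HF(S/\b,i-D)\leq \HF(A,i)-\HF(S/\b_{\mathrm{mon}},i-D)=\HF(B,i)$ and undoing the Artinian reduction completes the argument.
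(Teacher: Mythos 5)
Your skeleton --- Artinian reduction, linkage of $\a=\f+(G)$ to the Gorenstein ideal $\g=\f:G$, and comparison with the monomial model whose link is the complete intersection $(\x^\d):U_D$ --- matches the architecture of the paper's proof. But the argument has a genuine gap exactly at the step you yourself flag as ``genuinely hard'': the lower bound on $\HF(S/\g)$, i.e.\ the claim that the monomial complete intersection minimizes the Hilbert function among all codimension-three Artinian Gorenstein links arising this way. Via the linkage identity this claim is essentially \emph{equivalent} to the theorem, so it cannot be left as a promissory note. Moreover, the toolkit you propose for it --- a Macaulay/Clements--Lindstr\"om growth estimate combined with Gorenstein symmetry and an induction on $(\d;D)$ --- is not known to suffice: pure growth estimates of that kind are exactly what fail to prove EGH-type statements in general (which is why the conjecture is open, and why the paper only obtains weaker bounds in codimension $\geq 4$). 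The special input that makes codimension three work is structural: the paper invokes Chong's theorem (Lemma \ref{Lemma Gorenstein minimal}, resting on \cite[Corollary 11]{Chong} and ultimately on the Buchsbaum--Eisenbud structure theorem for height-three Gorenstein ideals) to replace $\g$ by a $\d$-LPP ideal with the same Hilbert function, then links back to a monomial ideal $\b=(\x^\d):\LL$ and applies the Clements--Lindstr\"om/Mermin--Peeva theorem \cite[Theorem 1.2]{MP} to compare $\b$ with $\LL(\d;D)$. Without some such structural input your induction does not close.

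A secondary but real issue is the Artinian reduction. You dismiss it as ``standard but careful,'' but the obstruction is not merely that $S/\a$ fails to be Cohen--Macaulay: in positive characteristic the image of $G$ can vanish modulo \emph{every} sufficiently general linear form regular on $S/\f$ (see Example \ref{Example reduction}), in which case cutting down destroys the almost-complete-intersection structure and yields only the useless bound $\HF(S/\a)\leq\HF(S/\f)$. The paper devotes Theorem \ref{Theorem Artinian} to this, using partial initial ideals with respect to the weight $\omega=(1,\ldots,1,0)$ and a socle-degree argument to manufacture a new degree-$D$ element surviving the reduction. Since the theorem is stated over an arbitrary field, your reduction step needs this (or an equivalent device) and not just a general hyperplane section.
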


Theorem \ref{THMX B} in particular gives that the multiplicity (denoted $\e(-)$) of an almost complete intersection of degrees $(d_1,d_2,d_3;D)$ is at most $\e(S/\LL(\d;D))$, as conjectured in \cite[Conjecture CB12]{EGH_CB}. We would like to point out that the statement on Hilbert functions is stronger than the corresponding one on multiplicities. In fact, the standard techniques which usually allow to reduce to the Artinian case might fail for this purpose (see Example \ref{Example reduction}). We also note that the stronger statement on Hilbert function rather than just on multiplicities is needed in Section \ref{Section ACI} to improve the Cayley-Bacharach theorem in $\PP^n$, Theorem \ref{THMX Pn}, in a special case (see Theorem \ref{Theorem delta2}).

Using Theorem \ref{THMX B}, we immediately recover the Cayley-Bacharach theorem for points in $\PP^3$.
\begin{corollaryx}[Cayley-Bacharach in $\PP^3$] \label{Corollary P3} Let $\Gamma \subseteq \PP^3$ be a complete intersection of three surfaces of degrees $\d=(d_1,d_2,d_3)$. If $X$ is a surface of degree $D \leq \sigma=d_1+d_2+d_3-3$ which contains at least $d_1d_2d_3-\e(S/\LL(\d;D))+1$ points of $\Gamma$, then $X$ contains $\Gamma$.
\end{corollaryx}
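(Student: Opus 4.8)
The plan is to deduce Corollary~\ref{Corollary P3} from Theorem~\ref{THMX B} by passing to the residual of $\Gamma$ with respect to $X$, turning the extremal upper bound on the Hilbert function of an almost complete intersection into a lower bound on the number of points of $\Gamma$ that $X$ does \emph{not} contain. Since $\Gamma\subseteq\PP^3$ is a complete intersection of three surfaces of degrees $\d$, its saturated ideal is a complete intersection $\f=(f_1,f_2,f_3)\subseteq S$ with $\deg f_i=d_i$ and $\e(S/\f)=d_1d_2d_3$, the number of points of $\Gamma$. Writing $X=V(G)$ with $\deg G=D$, the containment $X\supseteq\Gamma$ is equivalent to $G\in\f$, so I argue by contraposition and assume $G\notin\f$. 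Then $\a:=\f+(G)$ has height three — its vanishing locus is the proper, hence still finite, subscheme $\Gamma\cap X\subsetneq\Gamma$ — and $D\le\sigma=d_1+d_2+d_3-3$, so $\a$ meets the hypotheses of Theorem~\ref{THMX B}.

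The bridge between the points on $X$ and those off $X$ is the residual short exact sequence induced by multiplication by $G$ on $S/\f$:
\[
0\longrightarrow\big(S/(\f:G)\big)(-D)\xrightarrow{\ \cdot G\ }S/\f\longrightarrow S/\a\longrightarrow 0 .
\]
Its cokernel is $S/\a$ because $\a=\f+(G)$, while the kernel of $\cdot G$ is $(\f:G)/\f$, yielding the shifted residual quotient on the left. Scheme-theoretically $S/\a$ is the coordinate ring of $\Gamma\cap X$ and $S/(\f:G)$ that of the residual subscheme $\Gamma\setminus X$; as $\Gamma$ is reduced, their multiplicities record the points of $\Gamma$ lying on and off $X$, respectively. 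Additivity of multiplicity along the sequence gives the identity
\[
\e\big(S/(\f:G)\big)=d_1d_2d_3-\e(S/\a),
\]
so that a lower bound on the number of points omitted by $X$ is precisely an upper bound on $\e(S/\a)$.

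That upper bound is exactly what Theorem~\ref{THMX B} supplies: $\HF(S/\a)\le\HF(S/\LL(\d;D))$, hence $\e(S/\a)\le\e(S/\LL(\d;D))$. Substituting into the displayed identity shows that, whenever $G\notin\f$,
\[
\e\big(S/(\f:G)\big)\ \ge\ d_1d_2d_3-\e(S/\LL(\d;D)),
\]
i.e. $X$ omits at least $d_1d_2d_3-\e(S/\LL(\d;D))$ of the points of $\Gamma$. Reading this through the residual count, a surface $X$ that fails to contain $\Gamma$ must omit at least that many of its points; the contrapositive is precisely the implication asserted in Corollary~\ref{Corollary P3}.

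The only genuine obstacle, once Theorem~\ref{THMX B} is granted, is the passage between multiplicities and honest point–counts. One must check that $\f$ is the saturated ideal of $\Gamma$, so that $\e(S/\f)=d_1d_2d_3$; that reducedness of $\Gamma$ makes $\e(S/(\f:G))$ equal to the number of points off $X$ while $\e(S/\a)$ controls the number on $X$; and that the degenerate case $\Gamma\cap X=\varnothing$, where $\a$ is $\m$-primary, is dispatched directly. With these identifications the inequality $\e(S/\a)\le\e(S/\LL(\d;D))$ of Theorem~\ref{THMX B} passes through the residual sequence to give precisely the lower bound on omitted points that Corollary~\ref{Corollary P3} requires.
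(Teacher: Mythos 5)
Your argument is correct in substance and follows essentially the same skeleton as the paper's proof: argue by contraposition, bound $\e(S/\a)$ for $\a=\f+(G)$ via Theorem~\ref{THMX B} (the paper routes this through Corollary~\ref{Corollary multiplicity ACI codim 3}, which computes $\e(S/\LL(\d;D))=d_1d_2d_3-c_1c_2c_3$), and then convert the multiplicity bound into a point count. The only real difference is bookkeeping. The paper bounds the points of $\Gamma$ lying \emph{on} $X$ directly: they are among the closed points of $V(\a)$, and each such point contributes at least $1$ to $\e(S/\a)$ by the associativity formula, so no exact sequence and no reducedness of $\Gamma$ are needed. You instead run the residual sequence $0\to (S/(\f:G))(-D)\to S/\f\to S/\a\to 0$ and count the points \emph{off} $X$, which obliges you to justify reducedness of $\Gamma$ (without it, $\e(S/(\f:G))$ only bounds the number of omitted points from above, not computes it), the equidimensionality needed for additivity of multiplicity (here $\dim S/(\f:G)=1$, since $\Ass(S/(\f:G))\subseteq\Ass(S/\f)$ and $\f$ is unmixed), and the degenerate case $\Gamma\cap X=\varnothing$, where $\Ht(\a)=4$ and Theorem~\ref{THMX B} does not apply. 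You flag all of these, so the argument goes through; it is simply a heavier implementation of the same idea.

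One caution about your closing sentence. What your argument (and the paper's) actually establishes is: if $X$ does not contain $\Gamma$, then $X$ meets $\Gamma$ in at most $\e(S/\LL(\d;D))=d_1d_2d_3-c_1c_2c_3$ points, so the threshold this proof supports is $\e(S/\LL(\d;D))+1=d_1d_2d_3-c_1c_2c_3+1$. That is the intended statement --- it matches the paper's own example (threshold $D^3-D^2+D+1$ for $\d=(D,D,D)$) --- but it is \emph{not} the printed formula $d_1d_2d_3-\e(S/\LL(\d;D))+1$, which evaluates to $c_1c_2c_3+1$ and would make the corollary false (a cubic through $7$ points of a complete intersection of three cubics certainly need not contain all $27$). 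The printed formula is a slip in which $\e(S/\LL(\d;D))$ is conflated with $c_1c_2c_3$, the multiplicity of the linked ideal $(\x^\d):U_D$; your claim that your inequality is ``precisely the implication asserted'' silently makes the same identification. So your proof, exactly like the paper's, proves the corrected statement rather than the literal one.
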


We refer to Section \ref{Section Gorenstein} for an explicit way to compute $\e(S/\LL(\d;D))$ in terms of a new sequence $\c=(c_1,c_2,c_3)$, constructed from $\d$ and $D$. To give an example, if a surface of degree $D$ in $\PP^3$ contains at least $D^3-D^2+D+1$ points of a complete intersection of three surfaces of degree $D$, then it must contains all $D^3$ of them.


An analogue of Theorem \ref{THMX B} is not known, in general, for almost complete intersections of codimension higher than three. However, a result of Francisco \cite{F} gives an upper bound on the Hilbert function of any almost complete intersection in one specific degree. 
In Section \ref{Section ACI}, we exhibit upper bounds for the multiplicity of almost complete intersections of any height combining a repeated use of Francisco's theorem with several other techniques (see Theorems \ref{THM multiplicity} and \ref{Theorem symmetric}). While our estimates are not in general as sharp as the ones predicted by \cite[Conjecture CB12]{EGH_CB}, they significantly improve the best known upper bounds, due to Engheta \cite{E} and later extended by Huneke, Mantero, McCullough and Seceleaenu \cite{HMMCS}, in all those circumstances in which the latter are not already sharp (see Remark \ref{Remark HMMCS}). 

Using these estimates, we obtain a Cayley-Bacharach-type theorem in $\PP^n$. We refer the reader to Section \ref{Section ACI} and Theorem \ref{THM Pn} for the definition of the integer $\delta(\d;D)$ which appears in the statement of the theorem.

\begin{theoremx}[Cayley-Bacharach in $\PP^n$] \label{THMX Pn} Let $\Gamma \subseteq \PP^n$ be a complete intersection of degrees $\d = (d_1,\ldots,d_n)$. If $X$ is a hypersurface of degree $D < \sigma = \sum_{i=1}^n(d_i-1)$ which contains at least $\delta(\d;D) = \prod_{i=1}^n d_i - \sum_{m=D+1}^{\tau_-} \varphi_m - \sum_{m=D+1}^{\tau^+} \varphi_m - 1$ points of $\Gamma$, then $X$ contains $\Gamma$. 
\end{theoremx}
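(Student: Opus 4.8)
The plan is to establish the contrapositive and to reduce the geometric assertion to the multiplicity estimates for almost complete intersections obtained in this section. Write $S=\kk[x_0,\dots,x_n]$ for the homogeneous coordinate ring of $\PP^n$, let $\f=(f_1,\dots,f_n)$ be the complete intersection of degrees $\d$ whose zero locus is $\Gamma$, and let $G$ be a form of degree $D$ with $X=V(G)$. Since the points of $\Gamma$ form a reduced complete intersection, $I(\Gamma)=\f$, so the failure of $X$ to contain $\Gamma$ is exactly the condition $G\notin\f$. Granting this, I must show that $X$ passes through at most $\delta(\d;D)-1$ of the points of $\Gamma$, which is precisely the negation of the hypothesis that it contains at least $\delta(\d;D)$ of them.

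First I would turn the count of points of $\Gamma$ on $X$ into a multiplicity. Setting $\a=\f+(G)$, one has $V(\a)=\Gamma\cap X$ scheme-theoretically. If this intersection is empty there is nothing to prove, so I may assume it is not; then $\a$ has height $n$, exactly as $\f$, and is a genuine almost complete intersection. Because $\Gamma$ is reduced, the local ring of $\Gamma$ at each of its points is a field, so $\Gamma\cap X$ inherits no embedded or thickened structure and the number of points of $\Gamma$ lying on $X$ equals $\e(S/\a)$ exactly (if $\Gamma$ is allowed to be non-reduced one simply replaces point-counts by lengths throughout). Next I would pass to the Artinian setting by cutting with a general linear form $\ell$: this produces $\bar S/\bar\a$ with $\bar S=S/(\ell)\cong\kk[y_1,\dots,y_n]$ and $\bar\a=\bar\f+(\bar G)$ an Artinian almost complete intersection of type $(\d;D)$, and for generic $\ell$ one has $\bar G\notin\bar\f$ together with $\e(S/\a)=\dim_\kk \bar S/\bar\a$. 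I want to emphasize that this reduction is available only because I need to control a multiplicity: the corresponding reduction at the level of Hilbert functions may genuinely fail, as recorded in Example~\ref{Example reduction}, and it is exactly for this reason that the sharper codimension-three statement, Theorem~\ref{Theorem EGH ACI}, must be proved by hand rather than by reduction.

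Finally I would feed $\bar\a$ into the multiplicity bounds for almost complete intersections proved in this section, Theorems~\ref{THM multiplicity} and~\ref{Theorem symmetric}. Using $D<\sigma=\sum_{i=1}^n(d_i-1)$, these give
\[
\e(S/\a)=\dim_\kk \bar S/\bar\a\ \leq\ \delta(\d;D)-1,
\]
where $\delta(\d;D)$ is the invariant attached to $(\d;D)$ in Theorem~\ref{THM Pn}: its defining expression $\prod_{i=1}^n d_i-\sum_{m=D+1}^{\tau_-}\varphi_m-\sum_{m=D+1}^{\tau^+}\varphi_m-1$ is assembled precisely so as to encode this upper bound, the summands $\varphi_m$ recording how an extra generator of degree $D$ depresses the Hilbert function in degrees $m>D$ and the cutoffs $\tau_-,\tau^+$ marking the ranges over which those drops accumulate. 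Combining the two displays, $X$ meets $\Gamma$ in at most $\delta(\d;D)-1$ points, which is the contrapositive and finishes the argument. The genuinely hard work is entirely inside the last step: the multiplicity estimate, extracted from an iterated use of Francisco's theorem together with the lex-plus-powers machinery. Within the proof of the present theorem, by contrast, the only delicate point is the legitimacy of the Artinian reduction, which must be carried out for multiplicities alone and cannot be promoted to the level of Hilbert functions; a further refinement of the bound in a boundary case is where the finer Hilbert-function statement of Theorem~\ref{Theorem EGH ACI} re-enters, yielding the improvement of Theorem~\ref{Theorem delta2}.
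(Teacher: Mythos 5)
Your overall strategy is the paper's: argue by contraposition, observe that $G \notin \f$ makes $\a = \f+(G)$ an almost complete intersection of degrees $(\d;D)$, bound the number of points of $\Gamma$ on $X$ by $\e(S/\a)$ via the associativity formula, and invoke Theorem \ref{Theorem symmetric} to get $\e(S/\a) \leq \delta(\d;D)-1$. That is exactly the argument the paper outlines (and declines to repeat) after Theorem \ref{THM Pn}, so the skeleton is fine.

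The one step that is wrong as written is your Artinian reduction. You propose to cut by a general linear form $\ell$ and assert that ``for generic $\ell$ one has $\bar{G} \notin \bar{\f}$.'' This is precisely the assertion that Example \ref{Example reduction} refutes in positive characteristic: there the image of $G$ lies in $\f + (y)$ for \emph{every} linear form $y$ regular modulo $\f$, so the hyperplane section is no longer an almost complete intersection of degrees $(\d;D)$ and the multiplicity bounds for such ideals cannot be fed the section. You have also misread what that example is about: it does not say that the reduction works for multiplicities but fails for Hilbert functions; it says that the naive reduction by a general hyperplane can destroy the extra generator altogether, which is fatal for either purpose. The paper's fix is Theorem \ref{Theorem Artinian}, which replaces the hyperplane section by a partial initial ideal construction with respect to the weight $(1,\ldots,1,0)$ and produces a (possibly different) Artinian almost complete intersection $\ov{\a}$ of the same degrees with $\HF(S/\a) \leq \HF(S/\ov{\a}S)$. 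Fortunately for you, this entire step is superfluous in your write-up: Theorem \ref{Theorem symmetric} is stated for arbitrary, not necessarily Artinian, almost complete intersections (its proof already invokes Theorem \ref{Theorem Artinian} internally), so you may apply it directly to $\a \subseteq \kk[x_0,\ldots,x_n]$ and delete the hyperplane-section paragraph. With that deletion the proof is correct and coincides with the paper's. A minor further remark: you only need the inequality $\#(\Gamma \cap X) \leq \e(S/\a)$, not equality, and you should note (as you implicitly do) that if $\Gamma \cap X = \emptyset$ the height of $\f+(G)$ jumps to $n+1$, so that case must be dispatched separately.
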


As an explicit consequence of Theorem \ref{THMX Pn}, if a cubic hypersurface in $\PP^{2n}$ contains at least $3^{2n} - (6n^2-8n+3)$ points of a complete intersection of $2n$ cubics, then it contains all of them. 

As another application, if a hypersurface of degree $D \leq n$ in $\PP^n$ contains at least $2^n-\lfloor \frac{3(n-D)^2+1}{4} \rfloor$ points of a complete intersection of $n$ quadrics, then it contains all of them.

Finally, combining Theorem \ref{THMX B} and our new bounds on the multiplicity of almost complete intersection of any height, we improve Theorem \ref{THMX Pn} in case the degree $D$ of the hypersurface $X$ is less than $d_4$, see Theorem \ref{Theorem delta2}. As already pointed out, for this result it is crucial that Theorem \ref{THMX B} gives an upper bound on the Hilbert function of an almost complete intersection of codimension three, rather than on its multiplicity alone. In this scenario, when Theorem \ref{Theorem delta2} can be applied it drastically improves Theorem \ref{THMX Pn}, and it often allows to obtain estimates which are rather close to the optimal ones of \cite[Conjecture CB12]{EGH_CB} (see Example \ref{Example (4,4,4,10;4)}).

\subsection*{Acknowledgments} We thank Martin Kreuzer and Lorenzo Robbiano for pointing out some inaccuracies on a previous version of the paper.

\section{Almost complete intersections of codimension three} \label{Section Gorenstein}

The goal of this section is to prove an upper bound on the Hilbert function of almost complete intersections of codimension three. 

We start by setting up some notation, which will be used throughout the article. In what follows, $S= \kk[x_1,\ldots,x_n]$ denotes a polynomial ring over any field $\kk$. We consider the standard grading on $S$, that is, $\deg(x_i)=1$ for all $i$. We denote by $\m$ the irrelevant maximal ideal $(x_1,\ldots,x_n)S$ of $S$. We adopt the convention that a sum $\sum_i^j (-)$ equals zero whenever $j<i$. Given a finitely generated graded $S$-module $M$ we write $\HF(M)$ for the Hilbert function of $M$, that is, the numerical function $j \in \ZZ \mapsto \HF(M;j) = \dim_\kk(M_j)$, and $\e(M)$ for its multiplicity. Given two graded $S$-modules $M$ and $N$, we write $\HF(M) \leq \HF(N)$ to mean $\HF(M;j) \leq \HF(N;j)$ for all $j \in \ZZ$.

Let $\d = (d_1,\ldots,d_h) \in \NN^{h}$, with $1 \leq d_1 \leq \cdots \leq d_h$. We denote by $(\x^\d)$ the ideal $(x_1^{d_1},\ldots,x_h^{d_h})$ of $S$. An ideal $\LL \subseteq S$ is a called a $\d$-LPP ideal if we can write $\LL=(\x^\d) + L$, where $L$ is a lexicographic ideal (see \cite[Definitions 4 and 5]{CM}). We now state the current version of the Eisenbud-Green-Harris conjecture \cite{EGH} (see \cite{CM}). 

\begin{conjecture}[$\EGH{\d}{n}$] \label{Conjecture EGH}
Let $I$ be a homogeneous ideal of $S=\kk[x_1,\ldots,x_n]$, containing a regular sequence of degrees $\d = (d_1,\ldots,d_h)$. There exists a $\d$-LPP ideal containing $(\x^\d)$ which has the same Hilbert function as $I$. 
\end{conjecture}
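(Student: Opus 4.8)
The statement $\EGH{\d}{n}$ is a well-known open conjecture, so rather than a complete argument I outline the natural strategy one would pursue, together with the point at which it resists.

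The plan is to separate the problem into a \emph{degeneration} step and a \emph{combinatorial} step. First I would try to reduce to the case in which the regular sequence is the monomial one $x_1^{d_1},\dots,x_h^{d_h}$, that is, to the situation $(\x^\d) \subseteq I$. The motivation is that this monomial case is genuinely understood: in the ring $R = S/(\x^\d)$ every homogeneous ideal has the same Hilbert function as a lex ideal of $R$, by the Clements--Lindström theorem (taking $d_{h+1}=\cdots=d_n=\infty$ for the free variables). Pulling such a lex ideal back to $S$ produces exactly a $\d$-LPP ideal containing $(\x^\d)$ with the prescribed Hilbert function, which is the conclusion we want. Thus the combinatorial step is essentially a black box once the reduction is in place, and it already settles the Artinian case $h=n$, where $R$ is itself Artinian.

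The reduction step is where all the difficulty lies. The naive attempt is to pass to an initial ideal: taking $\In(I)$ with respect to any term order preserves $\HF(S/I)$, so one would be happy if some term order forced $\In(f_i)=x_i^{d_i}$ for every $i$ simultaneously. This essentially never happens, because the initial form of a general element of degree $d_i$ in a regular sequence is not a pure power; a Gröbner degeneration moves $I$ to a monomial ideal, but there is no reason for that monomial ideal to contain the pure powers $x_i^{d_i}$, and in general it does not. What one really needs is a flat family over $\mathbb{A}^1$ whose general fiber is $S/I$ and whose special fiber is a quotient by an ideal containing $(\x^\d)$, with constant Hilbert function along the family --- in other words a degeneration of the \emph{regular sequence} $f_1,\dots,f_h$ into the monomial regular sequence, not merely of the ideal $I$.

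The main obstacle, therefore, is to construct such a Hilbert-function-preserving degeneration of an arbitrary regular sequence to the pure-power one; this is precisely the content that keeps $\EGH{\d}{n}$ open in general. The strategy does succeed whenever this degeneration can be written down explicitly: when each $f_i$ is a product of linear forms one can use a distraction (a polarization-type deformation) to specialize to the monomial complete intersection while controlling the Hilbert function, and when $h=n$ the combinatorial step alone suffices. Short of a general such construction, I would attempt the reduction inductively --- fix $f_h$, work modulo it, and try to propagate a known instance of $\EGH{(d_1,\dots,d_{h-1})}{n}$ or $\EGH{\d}{n-1}$ upward --- but the inductive step again requires comparing $\HF(S/I)$ with that of a monomial model after quotienting, which reintroduces the same degeneration problem one level down.
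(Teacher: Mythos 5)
The statement you were asked about is a \emph{conjecture}: the paper gives no proof of $\EGH{\d}{n}$ and explicitly describes it as wide-open in general, listing only the special cases in which it is known. Your assessment is therefore accurate and consistent with the paper: you correctly identify that the Clements--Lindstr\"om theorem settles the case where $I$ contains the monomial regular sequence $(\x^\d)$ (the paper cites exactly this circle of results), that distractions handle regular sequences of products of linear forms, and that the genuine obstruction is a Hilbert-function-preserving degeneration of an arbitrary regular sequence to the pure-power one. There is nothing to compare against, since the paper offers no argument for this statement; your write-up is an honest and correct account of the status of the problem rather than a proof, which is the appropriate response here.
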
 

It is easy to show that $I$ satisfies $\EGH{\d}{n}$ if and only if the following holds: for any $j \in \ZZ$, if $L$ is a lexicographic ideal generated in degree $j$ such that the $\d$-LPP ideal $\LL(j)=(\x^{\d}) + L$ satisfies $\HF(S/I;j) = \HF(S/\LL(j);j)$, then $\HF(S/I;j+1) \leq \HF(S/\LL(j);j+1)$. Because of standard properties of lexsegment ideals, the latter is also equivalent to $\LL(j)_{\geq j+1} \subseteq \LL(j+1)$, where $\LL(j)_{\geq j+1}$ denotes the ideal generated by the elements of $\LL(j)$ of degree at least $j+1$.

Conjecture $\EGH{\d}{n}$ is known, among other cases, if $\d=(d_1,d_2)$ \cite{Richert, Cooper}, if $I$ contains a monomial regular sequence of degrees $\d$ \cite{CL,MP,CK1}, if the degrees of the forms in the regular sequence grow sufficiently fast \cite{CM}, if $I=Q_1+Q_2$, where $Q_1$ is generated by a regular sequence of quadrics and $Q_2$ is generated by general quadratic forms \cite{HP,G}, if the regular sequence factors as a product of linear forms \cite{A}, and if $d_1 = \ldots = d_h=2$ and $h \leq 5$ \cite{GuHo}. In general, however, the conjecture is wide-open. 

One more case in which the conjecture is known is for the class of minimally licci ideal, defined by Chong \cite{Chong}. Chong proves that, if $\g \subseteq S$ is an ideal of height three which contains a regular sequence of degrees $\d$ among its minimal generators, and such that $S/\g$ is Gorenstein, then $\g$ satisfies $\EGH{\d}{n}$. The condition that the regular sequence is part of a minimal generating set for $\g$ can actually be removed, as the following lemma shows.

\begin{lemma} \label{Lemma Gorenstein minimal} Let $\g \subseteq \kk[x_1,\ldots,x_n]$ be an ideal of height three, containing a regular sequence of degrees $\d = (d_1,d_2,d_3)$. If $S/\g$ is Gorenstein, then $\g$ satisfies $\EGH{\d}{n}$. 
\end{lemma}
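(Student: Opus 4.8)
The plan is to deduce the statement from Chong's theorem \cite{Chong}, which already gives the conclusion when the regular sequence is part of a minimal generating set. Since the given regular sequence $f_1,f_2,f_3$ need not consist of minimal generators of $\g$, the strategy is to replace it by a regular sequence of \emph{smaller} degrees that does lie in a minimal generating set, apply Chong in those degrees, and then transport the conclusion back up to the degrees $\d$ via a monotonicity property of the EGH growth bound. Throughout I may assume $\kk$ is infinite: the Gorenstein and height hypotheses, the Hilbert function, and the property $\EGH{\d}{n}$ are all insensitive to the flat base change $\kk\to\kk(t)$, so it suffices to treat that case.

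First I would produce a regular sequence $h_1,h_2,h_3\in\g$ of degrees $\ul{e}=(e_1,e_2,e_3)$ with $e_i\le d_i$, consisting of minimal generators of $\g$. The key numerical input is that, for each $i$, the ideal $J_i$ generated by the minimal generators of $\g$ of degree at most $d_i$ contains $(f_1,\dots,f_i)$ --- any element of $\g$ of degree $\le d_i$ is an $S$-combination of minimal generators of degree $\le d_i$ --- and hence $\Ht J_i\ge i$. I would then build $h_1,h_2,h_3$ greedily, degree by degree, taking general $\kk$-linear combinations of minimal generators in each degree and enlarging the regular sequence whenever the height increases; the bound $\Ht J_i\ge i$ is what should force the $i$-th element into degree $\le d_i$. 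Applying Chong's theorem to $h_1,h_2,h_3$ then yields that $\g$ satisfies $\EGH{\ul{e}}{n}$.

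Finally I would upgrade $\EGH{\ul{e}}{n}$ to $\EGH{\d}{n}$. Using the reformulation recorded after Conjecture \ref{Conjecture EGH}, satisfying $\EGH{\d}{n}$ is equivalent to the growth bound $\HF(S/\g;j+1)\le g_\d(\HF(S/\g;j),j)$ for every $j$, where $g_\d(a,j)$ denotes the value in degree $j+1$ of the quotient by the $\d$-LPP ideal whose degree-$j$ value is $a$; this is well defined for the relevant values because $\g$ contains regular sequences of degrees $\ul e$ and $\d$, so $\HF(S/\g;j)\le\HF(S/(\x^{\ul e});j)\le\HF(S/(\x^\d);j)$. The point is the monotonicity $g_{\ul{e}}(a,j)\le g_\d(a,j)$ whenever $\ul{e}\le\d$ componentwise: enlarging the exponents only relaxes the Clements--Lindstr\"om/Macaulay-type constraint imposed by the powers, so the maximal admissible growth can only increase. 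Granting this, the growth bound for $\ul{e}$ obtained from Chong implies the one for $\d$, and hence $\g$ satisfies $\EGH{\d}{n}$.

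The main obstacle I anticipate is the first step, guaranteeing a minimal-generator regular sequence in degrees $\le\d$. One cannot hope to realize the degrees $\d$ exactly: already for the complete intersection $\g=(x_1^2,x_2^2,x_3^2)$, which contains the regular sequence $x_1^2,x_2^2,x_3^3$ of degrees $(2,2,3)$, there is no minimal generator of degree $3$, so the passage to smaller degrees together with the monotonicity step is genuinely unavoidable. More seriously, for an arbitrary ideal the low-degree minimal generators can fail to contain a regular sequence of the expected length even when $\Ht J_i\ge i$, because the minimal generators in each individual degree may all be trapped inside a single minimal prime of the partial sequence. This is precisely where I expect to need the Gorenstein hypothesis --- via unmixedness and the Buchsbaum--Eisenbud structure of grade-three Gorenstein ideals --- to exclude such degenerate configurations. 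The monotonicity in the third paragraph is the other technical ingredient; I would either cite it or derive it directly from the characterization of $\d$-LPP Hilbert functions encoded in the reformulation already stated in the paper.
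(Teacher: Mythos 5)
Your proposal reproduces the paper's argument: reduce to $\kk$ infinite, find a regular sequence of degrees $\d'$ with $d_i'\leq d_i$ among the minimal generators of $\g$, apply Chong's theorem in degrees $\d'$, and transport the conclusion back up to $\d$. For the last step the paper simply notes that the $\d'$-LPP ideal supplied by Chong is a monomial ideal containing $(\x^{\d'})\supseteq(\x^{\d})$, so by \cite[Theorem 1.2]{MP} it shares its Hilbert function with a $\d$-LPP ideal; that theorem is exactly the monotonicity you propose to cite, so that part is sound. Where your diagnosis goes astray is the anticipated obstacle: producing the minimal-generator regular sequence in degrees $\leq\d$ requires neither the Gorenstein hypothesis nor Buchsbaum--Eisenbud structure theory, only graded prime avoidance over an infinite field. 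The observation that defuses your ``trapped in a single minimal prime'' scenario is that one should work with an entire graded piece of $\g$ rather than with the minimal generators of each degree separately: if $P$ is a minimal prime of $(h_1,\dots,h_{i-1})$ and $\g_j\not\subseteq P$ for some $j<e$, then also $\g_e\supseteq S_{e-j}\g_j\not\subseteq P$ (multiply by a linear form outside $P$). Hence, choosing $e_i\leq d_i$ minimal with $\Ht((h_1,\dots,h_{i-1})+\g_{\leq e_i})\geq i$ (possible since $\Ht(\g_{\leq d_i})\geq i$), the space $\g_{e_i}$ is contained in no minimal prime of $(h_1,\dots,h_{i-1})$, while minimality of $e_i$ forces $\g_{\leq e_i-1}$, and therefore $(\m\g)_{e_i}$, into a single such prime, so that $\g_{e_i}\not\subseteq(\m\g)_{e_i}+\langle h_j:\deg h_j=e_i\rangle_{\kk}$; a general element of $\g_{e_i}$ then avoids all of these proper subspaces and extends the sequence, and sorting the resulting degrees keeps them componentwise at most $\d$.
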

\begin{proof}
We may harmlessly assume that $\kk$ is infinite. Since $\g$ contains a regular sequence of degrees $\d$, we can find a regular sequence $f_1',f_2',f_3'$ of degrees $\d'$, with $d_i' \leq d_i$ for $i=1,2,3$, among the minimal generators of $\g$. By \cite[Corollary 11]{Chong}, there exists a $\d'$-LPP-ideal $\LL$ which has the same Hilbert function as $\g$. Since $\LL$ contains $(\x^{\d'})$ which, in turn, contains $(\x^\d)$, by \cite[Theorem 1.2]{MP} we can find a $\d$-LPP ideal with the same Hilbert function as $\LL$, and this concludes the proof. 
\end{proof}

We now turn our attention to almost complete intersections. 

\begin{definition} \label{Defn ACI}
Let $\a$ be a homogeneous ideal of $S$. We say that $\a$ is an almost complete intersection of degrees $(\d;D) = (d_1,\ldots,d_h;D)$ if $\Ht(\a) = h$, and we can write $\a=\f+(G)$, where the ideal $\f=(f_1,\ldots,f_h)$ is generated by a regular sequence of degrees $d_1 \leq \cdots \leq d_h$, and $G$ is an element of degree $D$ which does not belong to $\f$. 
\end{definition}

Observe that we do not require that $\a$ is minimally generated by $h+1$ elements. For example, according to our definition, the ideal $\a=(x_1^2,x_2^3)+(x_2^2)$ is an almost complete intersection of degrees $(2,3;2)$, but also a complete intersection of degrees $(2,2)$. What is important to observe, though, is that an almost complete intersection of degrees $(\d;D)$ cannot be generated by a regular sequence of degrees $\d$. 

\begin{notation}
Given integers $(\d;D) = (d_1,\ldots,d_h;D)$, with $D \leq \sum_{i=1}^h(d_i-1)$, we let $\LL(\d;D) = (\x^\d) + (U_D)$ be the $\d$-LPP ideal of $S = \kk[x_1,\ldots,x_n]$ which is an almost complete intersection of degrees $(\d;D)$. In other words, $U_D$ is the largest monomial with respect to the lexicographic order which has degree $D$, and does not belong to $(\x^\d)$. 
\end{notation}

In order to apply Lemma \ref{Lemma Gorenstein minimal} to obtain an upper bound on the multiplicity of almost complete intersections, we will use partial initial ideals with respect to the weight order $\omega=(1,1,\ldots,1,0)$. For unexplained notation and terminology, we refer to \cite{CK} and \cite{CDS}, where such weight order is denoted by $\rev{1}$. 
For convenience of the reader, we recall the main features of such an object. Let $I$ be a homogeneous ideal in $S = \kk[x_1,\ldots,x_n]$, and assume that $\kk$ is infinite. After performing a sufficiently general change of coordinates, there is a vector space decomposition ${\rm in}_\omega(I) = \bigoplus_{j \geq 0} I_{[j]}x_n^j$, where each $I_{[j]}$ is an ideal in $\ov{S} = \kk[x_1,\ldots,x_{n-1}]$. This decomposition is analogous to the one in \cite[Section 6]{GreenGin}, where Green constructs partial elimination ideals for the lexicographic order. Observe that $I_{[0]}$ is the ideal defining the hyperplane section $S/(I+(x_n))$ viewed inside $\ov{S} \cong S/(x_n)$. In characteristic zero, the ideals $I_{[j]}$ automatically satisfy $\ov{\m} I_{[j+1]} \subseteq I_{[j]}$ for all $j \geq 0$, where $\ov{\m} = (x_1,\ldots,x_{n-1})\ov{S}$ (see \cite[Theorem 3.2]{CS-JCA}). We will refer to this phenomenon as stability of partial general initial ideals. We may achieve this also in characteristic $p>0$, without altering the relevant features of ${\rm in}_{\omega}(I)$, by recursively applying general distractions and partial initial ideals with respect to $\omega$. For a description of this process, see the proof of \cite[Theorem 4.1]{CK}, or \cite[Proposition 1.4]{CS-MA}. We point out that, while this process may change the ideals $I_{[j]}$, it can only enlarge $I_{[0]}$. 

We record these facts in a lemma, for future use.
\begin{lemma} \label{Lemma decomposition} Let $I$ be a homogeneous ideal in $S=\kk[x_1,\ldots,x_n]$, where $\kk$ is an infinite field. With the notation introduced above, after performing a sufficiently general change of coordinates, there exist ideals $I_{[j]} \subseteq \ov{S}$ and an ideal $\widetilde{I} = \bigoplus_{j\geq 0} I_{[j]} x_n^j$ of $S$ such that
\begin{itemize}
\item $\ov{\m} I_{[j+1]} \subseteq I_{[j]}$ for all $j \geq 0$.
\item $I+(x_n) \subseteq \widetilde{I}+(x_n)$, with equality if $\Char(\kk)=0$. 
\item $\HF(I) = \HF(\widetilde{I})$. 
\end{itemize}
\end{lemma}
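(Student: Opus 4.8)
The plan is to produce $\widetilde{I}$ explicitly as a partial initial ideal of $I$ with respect to the weight $\omega = (1,\ldots,1,0)$, after a sufficiently general linear change of coordinates, and then to verify the three listed properties one at a time, essentially by recording the discussion preceding the statement and invoking the cited results.

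First I would recall the shape of ${\rm in}_\omega$ for this weight. Writing a homogeneous polynomial $f = \sum_{j \geq 0} f_j x_n^j$ with each $f_j \in \ov{S}$ homogeneous, the $\omega$-degree of the term $f_j x_n^j$ equals $\deg(f) - j$; hence ${\rm in}_\omega(f) = f_{j_0} x_n^{j_0}$, where $j_0 = \min\{j : f_j \neq 0\}$. This yields the vector space decomposition ${\rm in}_\omega(I) = \bigoplus_{j \geq 0} I_{[j]} x_n^j$, and shows that $I_{[0]}$ is precisely the image of $I$ in $\ov{S} = S/(x_n)$, that is, the ideal defining the hyperplane section $S/(I+(x_n))$.

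In characteristic zero I would take $\widetilde{I} = {\rm in}_\omega(I)$ for general coordinates. The equality $\HF(I) = \HF(\widetilde{I})$ holds because passing to an initial ideal with respect to a weight order preserves Hilbert functions. The stability inequalities $\ov{\m} I_{[j+1]} \subseteq I_{[j]}$ are exactly \cite[Theorem 3.2]{CS-JCA}. Finally, since $I_{[0]}$ is the image of $I$ in $\ov{S}$, one has $\widetilde{I} + (x_n) = I_{[0]} + (x_n) = I + (x_n)$, which is the equality asserted in the second bullet.

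The main obstacle is the positive characteristic case, in which ${\rm in}_\omega(I)$ need not satisfy stability. Here I would instead define $\widetilde{I}$ via the recursive procedure that alternates general distractions with partial initial ideals with respect to $\omega$, as described in the proof of \cite[Theorem 4.1]{CK} and in \cite[Proposition 1.4]{CS-MA}. I would then argue that (i) each step preserves the Hilbert function, since both general distractions and initial ideals do, giving $\HF(I) = \HF(\widetilde{I})$; (ii) the procedure terminates at an ideal of the form $\bigoplus_{j \geq 0} I_{[j]} x_n^j$ for which the stability inequalities hold; and (iii) since every step can only enlarge the degree-zero piece $I_{[0]}$, one obtains the inclusion $I + (x_n) \subseteq \widetilde{I} + (x_n)$ rather than equality. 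The delicate point, where the quoted constructions must be used carefully, is to confirm both that stability is genuinely achieved at the end of the process and that the hyperplane-section ideal $I_{[0]}$ changes only by enlargement throughout.
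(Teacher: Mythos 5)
Your proposal is correct and follows essentially the same route as the paper, which itself only records these facts by pointing to the discussion preceding the lemma: take $\widetilde{I} = {\rm in}_\omega(I)$ in generic coordinates in characteristic zero (stability from \cite[Theorem 3.2]{CS-JCA}), and in positive characteristic run the recursive distraction/partial-initial-ideal procedure of \cite[Theorem 4.1]{CK} and \cite[Proposition 1.4]{CS-MA}, observing that it preserves Hilbert functions and can only enlarge $I_{[0]}$. The ``delicate points'' you flag at the end are precisely the content of those cited results, so your treatment matches the paper's level of detail.
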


Let $\a=\f+(G)$ be an almost complete intersection of degrees $(\d;D)$, with $D \leq \sum_{i=1}^h(d_i-1)$. In order to estimate the Hilbert function and the multiplicity of $S/\a$, it would be desirable to reduce to the Artinian case, without losing the relevant features of $\a$. In particular, if $y \in S$ is a linear form which is regular modulo $\f$, then it would be good to have that the image of $G$ is non-zero in $S/(\f+(y))$, at least for a general choice of $y$. While this is true if $\kk$ has characteristic zero as a consequence of the proof of the forthcoming Theorem \ref{Theorem Artinian}, it may be false in prime characteristic.
\begin{example} \label{Example reduction} Let $S=\kk[x_1,x_2,x_3]$, with $\Char(\kk)=p>0$, and let $\a=(x_1^{p^2},x_2^{p^2}) + (x_1x_3^{p^2})$, which is an almost complete intersection of degrees $(p^2,p^2;p^2+1)$. A linear regular element for $S/(x_1^{p^2},x_2^{p^2})$ is necessarily of the form $y = \lambda_1x_1+\lambda_2x_2+\lambda_3x_3$, with $\lambda_i \in \kk$, and $\lambda_3\ne 0$. It follows that  $G=x_1x_3^{p^2}$ is zero in $S/(x_1^{p^2},x_2^{p^2},y)$ for any choice of $y$ as above. 
\end{example}

The next theorem allows us to tackle the issue illustrated by the previous example. Even if the image of $G$ can be zero in $S/(\f+(y))$ for any general linear form $y$, using the techniques described above we can still reduce to the Artinian case in order to estimate the Hilbert function of an almost complete intersection, even in characteristic $p>0$.

\begin{theorem} \label{Theorem Artinian}
Let $S=\kk[x_1,\ldots,x_n]$, where $\kk$ is an infinite field, and $\a = \f+(G)$ be an almost complete intersection of degrees $(\d;D)=(d_1,\ldots,d_h; D)$. If $D \leq \sigma=\sum_{i=1}^h (d_i-1)$, then there exists an Artinian almost complete intersection 
$\ov\a \subseteq \ov{S}=\kk[x_1,\ldots,x_h]$ of degrees $(\d;D)$ such that $\HF(S/\a) \leq \HF(S/\ov{\a}S)$. In particular, $\e(S/\a) \leq \e(\ov{S}/\ov{\a})$. 
\end{theorem}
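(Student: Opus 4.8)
The plan is to induct on $n-h$, reducing the number of variables by one at a time while preserving the degrees $(\d;D)$ of the almost complete intersection and a one‑sided Hilbert‑function bound. The base case $n=h$ is immediate: then $\Ht(\a)=n$, so $\a$ is already Artinian and we take $\ov\a=\a$. For the inductive step, writing $S'=\kk[x_1,\dots,x_{n-1}]$, I would produce from $\a$ an almost complete intersection $\a'$ of degrees $(\d;D)$ and height $h$ inside $S'$ with $\HF(S/\a)\le\HF(S/\a'S)$. Since $S/\a'S\cong(S'/\a')[x_n]$ and extension by a polynomial variable is convolution with a nonnegative function, hence monotone on Hilbert functions, applying the inductive hypothesis to $\a'$ and then extending by $x_n$ chains the two bounds: the resulting Artinian $\ov\a\subseteq\kk[x_1,\dots,x_h]$ satisfies $\HF(S/\a)\le\HF((S'/\a')[x_n])\le\HF((\ov S/\ov\a)[x_{h+1},\dots,x_n])=\HF(S/\ov\a S)$.

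For a single reduction I would pass to a sufficiently general system of coordinates and replace $\a$ by the ideal $\widetilde\a=\bigoplus_{j\ge0}\a_{[j]}x_n^j$ of Lemma~\ref{Lemma decomposition}, which has $\HF(S/\a)=\HF(S/\widetilde\a)$. Because $\widetilde\a$ is an ideal of $S$, the containment $x_n^j m\in\widetilde\a$ forces $x_n^{j+1}m\in\widetilde\a$, so the partial ideals form an increasing chain $\a_{[0]}\subseteq\a_{[1]}\subseteq\cdots$ in $S'$. Writing $S/\widetilde\a=\bigoplus_{j\ge0}(S'/\a_{[j]})x_n^j$ and using this chain gives
\[
\HF(S/\a;d)=\sum_{j\ge0}\HF(S'/\a_{[j]};d-j)\le\sum_{j\ge0}\HF(S'/\a_{[0]};d-j)=\HF\big((S'/\a_{[0]})[x_n];d\big),
\]
so it suffices to show that $\a_{[0]}$, or a subideal of it, is an almost complete intersection of degrees $(\d;D)$ and height $h$.

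The containment $\a+(x_n)\subseteq\widetilde\a+(x_n)$ shows $\a_{[0]}$ contains the images $\ov f_1,\dots,\ov f_h$ of the regular sequence; for general coordinates a general linear form is regular on the Cohen–Macaulay ring $S/\f$, so $\ov\f:=(\ov f_1,\dots,\ov f_h)$ is a regular sequence of degrees $\d$, whence $\Ht(\a_{[0]})\ge h$, and the displayed inequality forces $\dim(S'/\a_{[0]})\ge\dim(S/\a)-1=(n-1)-h$, so $\Ht(\a_{[0]})=h$. The heart of the matter is to produce a form of degree exactly $D$ in $\a_{[0]}\setminus\ov\f$; this is where Example~\ref{Example reduction} shows a naive hyperplane section can fail, and where $D\le\sigma$ enters. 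Using $\HF(S/\f)=\HF((S'/\ov\f)[x_n])$ (as $x_n$ is a nonzerodivisor on $S/\f$) together with $\HF(S/\a;D)=\HF(S/\f;D)-1$ (valid since $G\notin\f$) and $\a_{[j]}\supseteq\ov\f$ for all $j$, a telescoping count gives $\sum_{j=0}^{D}\big(\dim(\a_{[j]})_{D-j}-\dim(\ov\f)_{D-j}\big)=1$, so exactly one index $j_0\in\{0,\dots,D\}$ has $\dim(\a_{[j_0]})_{D-j_0}=\dim(\ov\f)_{D-j_0}+1$; pick $g\in(\a_{[j_0]})_{D-j_0}$ with $g\notin\ov\f$. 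The stability property $\ov{\m}\,\a_{[j+1]}\subseteq\a_{[j]}$ iterates to $\ov{\m}^{\,j_0}\a_{[j_0]}\subseteq\a_{[0]}$, so any degree‑$j_0$ form times $g$ lands in $(\a_{[0]})_D$; it remains to choose $m$ of degree $j_0$ with $mg\notin\ov\f$, i.e.\ $g\notin(0:_{S'/\ov\f}\ov{\m}^{\,j_0})$. When $n-1>h$ this holds because $S'/\ov\f$ is Cohen–Macaulay of positive dimension, so $(0:_{S'/\ov\f}\ov{\m}^{\,j_0})=0$; when $n-1=h$ the ring $S'/\ov\f$ is Artinian Gorenstein of socle degree $\sigma$, and the perfect pairing shows $(0:_{S'/\ov\f}\ov{\m}^{\,j_0})_e=0$ for $e\le\sigma-j_0$, which applies since $\deg g=D-j_0\le\sigma-j_0$ precisely because $D\le\sigma$. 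Either way we obtain $\ov G\in(\a_{[0]})_D\setminus\ov\f$, and $\a':=\ov\f+(\ov G)\subseteq\a_{[0]}$ is an almost complete intersection of degrees $(\d;D)$ and height $h$ with $\HF(S'/\a_{[0]})\le\HF(S'/\a')$, completing the step.

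The main obstacle, as this makes clear, is not the Hilbert‑function bookkeeping but the survival of the extra generator in degree $D$ after reduction: a general hyperplane section can annihilate $G$ modulo $\f$ in positive characteristic, and the purpose of the partial initial ideal together with the stability estimate and the numerical constraint $D\le\sigma$ is exactly to relocate the lost generator into $\a_{[0]}$ in the correct degree. Finally, the statement on multiplicities follows at once, since $\dim(S/\a)=n-h=\dim(S/\ov\a S)$ and $\e(S/\ov\a S)=\e(\ov S/\ov\a)$, so the pointwise bound $\HF(S/\a)\le\HF(S/\ov\a S)$ yields $\e(S/\a)\le\e(\ov S/\ov\a)$.
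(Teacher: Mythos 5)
Your proposal is correct and follows essentially the same route as the paper: induction on $\dim(S/\a)$, the decomposition $\widetilde{\a}=\bigoplus_j \a_{[j]}x_n^j$ of Lemma \ref{Lemma decomposition}, and the stability property combined with the socle degree of $S'/\ov{\f}$ (which is exactly where $D\leq\sigma$ enters). The only difference is cosmetic: the paper derives a socle contradiction to show that the excess of $\a_{[j]}$ over $\ov{\f}$ must already occur at $j=0$ in degree $D$, whereas you locate the unique excess at some index $j_0$ in degree $D-j_0$ and push it down into $\a_{[0]}$ by multiplying with a degree-$j_0$ form chosen via Gorenstein duality --- two equivalent formulations of the same idea.
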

\begin{proof} 
It suffices to show the inequality on Hilbert functions and, to prove it, we proceed by induction on $\dim(S/\a) \geq 0$. The base case is trivial, so assume $\dim(S/\a)>0$. After a general change of coordinates we may find a decomposition $\widetilde{\a} = \bigoplus_{j \geq 0} \a_{[j]}x_n^j$ for $\a$ as in Lemma \ref{Lemma decomposition}, and we may further assume that $x_n$ is regular for $\f$.

Let $S'=\kk[x_1,\ldots,x_{n-1}]$, and $\m' = (x_1,\ldots,x_{n-1})S'$. Since $x_n$ is regular for $\f$, the elements ${\rm in}_\omega(f_1),\ldots,{\rm in}_\omega(f_h)$ form a regular sequence of degree $\d$, sitting necessarily inside $\a_{[0]}$. Let $\f'$ be the ideal they generate inside $S'$. 

Define $j= \inf\{t \geq 0 \mid \HF(\a_{[t]}x_n^t;D) \ne \HF(\f'x_n^t;D)\}$. Observe that $j$ is finite, since otherwise the condition $\HF(\a_{[t]}x_n^t;D) = \HF(\f'x_n^t;D)$ for all $t\geq 0$ would imply that $\HF(\a;D) = \HF(\widetilde{\a};D) = \HF( \bigoplus_{t \geq 0} \f' x_n^t ;D) = \HF(\f;D)$, contradicting our assumption that $G \in \a \smallsetminus \f$.

We claim that $j=0$. If not, let $H x_n^j \in \a_{[j]}x_n^j$ be an element of degree $D$, so that $H \in \a_{[j]} \smallsetminus \f'$ is an element of degree $D-j<D$. Observe that $H \in \f':\m'$ by stability, and because $\a_{[j-1]}$ coincides with $\f'$ up to degree $D-j+1$. It follows that $H$ represents a non-zero element of $\soc(S'/\f')$. If $\dim(S/\a)=1$, then we reach a contradiction since $\deg(H) < \sum_{i=0}^h (d_i-1)$, and the latter is the degree in which the socle of $S'/\f'$ is concentrated. If $\dim(S/\a)>1$, then $\dim(S'/\f') = \Depth(S'/
\f')>0$, therefore $\soc(S'/\f')=0$. A contradiction again.

Therefore $j=0$, and $\a' = \f'+(H)S'$ is an almost complete intersection of degrees $(\d;D)$, with $\dim(S'/\a') = \dim(S/\a)-1$. By induction, there exists an Artinian almost complete intersection $\ov{\a} \subseteq \ov{S}$ such that $\HF(S'/\a') \leq \HF(S'/\ov{\a}S')$. Because $\HF(S/\a) \leq \HF(S/\a'S)$, it follows that $\HF(S/\a) \leq \HF(S/\ov{\a}S)$, and the proof is complete. 
\end{proof}

Building from Chong's work in \cite{Chong}, and using Lemma \ref{Lemma Gorenstein minimal} and Theorem \ref{Theorem Artinian}, we can now prove the main result of this section.

\begin{theorem} \label{Theorem EGH ACI} Let $\a \subseteq S = \kk[x_1,\ldots,x_n]$ be an almost complete intersections of degrees $(\d;D)=(d_1,d_2,d_3;D)$, with $D \leq \sigma = d_1+d_2+d_3-3$. Then $\HF(S/\a) \leq \HF(S/\LL(\d;D))$.
\end{theorem}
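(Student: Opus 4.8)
The plan is to reduce to the Artinian case and then pass to the Gorenstein ideal linked to $\a$, where Lemma~\ref{Lemma Gorenstein minimal} becomes available.

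First I would apply Theorem~\ref{Theorem Artinian} to obtain an Artinian almost complete intersection $\ov\a \subseteq \ov S = \kk[x_1,x_2,x_3]$ of degrees $(\d;D)$ with $\HF(S/\a) \leq \HF(S/\ov\a S)$. Since $D \leq \sigma$ forces $U_D \in \kk[x_1,x_2,x_3]$, both $\HF(S/\ov\a S)$ and $\HF(S/\LL(\d;D))$ are the convolution of a Hilbert function over $\kk[x_1,x_2,x_3]$ with $\HF(\kk[x_4,\ldots,x_n])$; as convolution against a fixed nonnegative sequence preserves $\leq$, it suffices to prove $\HF(\ov S/\ov\a) \leq \HF(\ov S/\LL(\d;D)\ov S)$. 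Thus I may assume $n=3$, $\kk$ infinite, and $\a$ Artinian.

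Next I would set up liaison. As $\f \subseteq \a$ with $\f$ a complete intersection of degrees $\d$, the link $\g = \f:\a$ contains $\f$, has height three, and $S/\g$ is Gorenstein: its canonical module $\ck{(S/\g)} \cong (\a/\f)(\sigma)$ is cyclic, generated by the image of $G$. Comparing degrees shows the socle degree of $S/\g$ is $s := \sigma - D$, and the linkage identity reads $\HF(S/\a;m) = \HF(S/\f;m) - \HF(S/\g;\sigma-m)$ for every $m$. Running the same computation for $\LL(\d;D) = (\x^\d)+(U_D)$, a routine colon calculation identifies its link $(\x^\d):(U_D)$ with the complete intersection $\mathcal{G} = (x_1^{c_1},x_2^{c_2},x_3^{c_3})$, where $U_D = x_1^{a_1}x_2^{a_2}x_3^{a_3}$ and $\c = (d_1-a_1,d_2-a_2,d_3-a_3)$; one has $\sum_i(c_i-1)=s$, so $S/\mathcal{G}$ also has socle degree $s$. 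Because $\HF(S/\f) = \HF(S/(\x^\d))$, the two linkage identities reduce the theorem to the single pointwise inequality
\[
\HF(S/\g;k) \;\geq\; \HF(S/\mathcal{G};k) \qquad \text{for all } k.
\]
To establish this I would invoke Lemma~\ref{Lemma Gorenstein minimal}: the Gorenstein ideal $\g$ satisfies $\EGH{\d}{3}$, so there is a $\d$-LPP ideal $\LL' = (\x^\d)+L'$ (with $L'$ lex) such that $\HF(S/\g) = \HF(S/\LL')$, and this Hilbert function is symmetric of socle degree $s$. It then suffices to prove $\HF(S/\LL') \geq \HF(S/\mathcal{G})$, i.e. that the complete intersection $\mathcal{G}$ has the smallest Hilbert function among $\d$-LPP ideals whose quotient is symmetric of socle degree $s$. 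The essential input is the ideal-theoretic containment $(\x^\d) \subseteq \LL'$ \emph{together with} the lex structure of $L'$: reaching a nonzero socle in degree $s$ caps how early and how $x_1$-heavy the generators of $L'$ can be, and I would show that this forces $\dim_\kk \LL'_m \leq \dim_\kk \mathcal{G}_m$ for all $m$ — equivalently, that no generator of $\LL'$ appears earlier than the corresponding generator $x_i^{c_i}$ of $\mathcal{G}$ — which is exactly the desired inequality.

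The hard part will be this minimality statement. It is genuinely ideal-theoretic rather than merely numerical: the naive constraint ``$\HF(S/\LL') \leq \HF(S/(\x^\d))$ with symmetric values and socle degree $s$'' admits smaller O-sequences than $\HF(S/\mathcal{G})$ (for instance $(1,1,\dots,1)$), and these are excluded only because no $\d$-LPP ideal containing $(\x^\d)$ realizes them while still reaching socle degree $s$. Converting this rigidity into the degreewise bound $\LL' \subseteq \mathcal{G}$ is where the real work lies, and it is precisely what makes the conclusion a statement about Hilbert functions rather than only about multiplicities, as needed in Section~\ref{Section ACI}. The surrounding bookkeeping — that the link of $\LL(\d;D)$ is the complete intersection of degrees $\c$, that $\ck{(S/\g)} \cong (\a/\f)(\sigma)$, and the socle-degree arithmetic $s=\sigma-D$ — is routine.
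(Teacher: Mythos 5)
Your reduction to the Artinian case via Theorem \ref{Theorem Artinian}, the linkage identity $\HF(S/\a;j) = \HF(S/\f;j) - \HF(S/\g;\sigma-j)$ for $\g=\f:\a$, the computation of the link of $\LL(\d;D)$ as the complete intersection $(\x^{\c})$, and the appeal to Lemma \ref{Lemma Gorenstein minimal} to replace $\g$ by an equi-Hilbert $\d$-LPP ideal all match the paper's proof. But the step you yourself flag as ``where the real work lies'' --- that every $\d$-LPP ideal $\LL'$ whose quotient has symmetric Hilbert function of socle degree $s=\sigma-D$ satisfies $\HF(S/\LL') \geq \HF(S/(\x^{\c}))$ --- is, via the two linkage identities, \emph{equivalent} to the theorem being proved, and you give no proof of it. The route you sketch (forcing the containment $\LL' \subseteq (\x^{\c})$, or the claim that ``no generator of $\LL'$ appears earlier than the corresponding generator $x_i^{c_i}$'') is not a formal consequence of symmetry plus socle degree: a $\d$-LPP ideal has generators $x_i^{d_i}$ together with an initial lex segment, there is no natural ``corresponding generator,'' and nothing in your argument rules out a lex generator sitting outside $(\x^{\c})$ while the Hilbert functions still balance. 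So the proposal has a genuine gap exactly at its central step.

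The paper closes this gap by a maneuver your proposal is missing: it never compares the Gorenstein link with $(\x^{\c})$ at all. Instead, letting $\LL$ be the $\d$-LPP ideal with $\HF(\LL)=\HF(\g)$ furnished by Lemma \ref{Lemma Gorenstein minimal}, it links \emph{back} inside the monomial complete intersection, setting $\b = (\x^{\d}):\LL$. This $\b$ is a \emph{monomial} ideal containing $(\x^{\d})$, and the linkage identity applied to the pair $((\x^{\d}),\LL)$ gives $\HF(S/\b)=\HF(S/\a)$. Now \cite[Theorem 1.2]{MP} (EGH for ideals containing a monomial regular sequence) applies to $\b$ and yields a $\d$-LPP ideal $\LL''$ with $\HF(\LL'')=\HF(\b)=\HF(\a)$. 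Since $\HF(\a;D) > \HF((\x^{\d});D)$, the lex part of $\LL''$ contains a degree-$D$ monomial outside $(\x^{\d})$, and a lex segment containing one such monomial contains the lex-largest one, namely $U_D$; hence $\LL'' \supseteq \LL(\d;D)$ and $\HF(S/\a)=\HF(S/\LL'') \leq \HF(S/\LL(\d;D))$. In short: the hard extremal statement you isolate is bypassed by a second linkage that converts the problem into the already-known monomial case of EGH, and without that (or an independent proof of your minimality claim) the argument does not go through.
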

\begin{proof}
We may assume that $\kk$ is infinite and, by Theorem \ref{Theorem Artinian}, that $S/\a$ is Artinian. Write $\a=\f+(G)$, where $\f$ is generated by a regular sequence of degrees $\d$, and $G$ is a form of degree $D$. By a standard argument of linkage (for instance, see \cite[Corollary 5.2.19]{Migliore}), for all $j \in \ZZ$ we get $\HF(S/\a;j) = \HF(S/\f;j) - \HF(S/\g;\sigma-j)$, where $\g=\f:\a$. Since $\g$ contains $\f$, and it defines a Gorenstein ring, by Lemma \ref{Lemma Gorenstein minimal} there is a $\d$-LPP  ideal $\LL$ with the same Hilbert function as $\g$. If we set $\b = (\x^\d): \LL$, then using linkage again we obtain that
\begin{align*}
\HF(S/\b;j) & = \HF(S/(\x^\d);j) - \HF(S/\LL;\sigma-j) \\
& = \HF(S/\f;j) - \HF(S/\g;\sigma-j) = \HF(S/\a;j)
\end{align*}
for all $j \in \ZZ$. By \cite[Theorem 1.2]{MP}, the monomial ideal $\b$ satisfies $\EGH{\d}{n}$. Therefore, there exists a $\d$-LPP ideal $\LL'$ with the same Hilbert function as $\b$. In particular, since $\LL'$ must contain $\LL(\d;D)$, we have that $\HF(S/\a) = \HF(S/\b) = \HF(S/\LL') \leq \HF(S/\LL(\d;D))$.
\end{proof}

\begin{remark} \label{Remark over socle} 
If $\a \subseteq S=\kk[x_1,\ldots,x_n]$ is an almost complete intersection of degrees $(\d;D)=(d_1,\ldots,d_h;D)$, with $D>\sigma = \sum_{i=1}^h(d_i-1)$, then the conclusion of Theorem \ref{Theorem EGH ACI} still holds, even without assuming that $h=3$. In fact, in this scenario we have that $\LL(\d;D) = (\x^\d) + (U_D)$, where $U_D=x_1^{d_1-1}\cdots x_h^{d_h-1}x_{h+1}^{D-\sigma}$. Iterating the argument used in the proof of Theorem \ref{Theorem Artinian}, we can find an almost complete intersection $\a' = \f'+(G') \subseteq S'=\kk[x_1,\ldots,x_{h+1}]$ of degrees $(\d;D)$ such that $\HF(S/\a) \leq \HF(S/\a'S)$. Moreover, we may assume that $x_{h+1}$ is regular modulo $\f'$. Since $\HF(\LL(\d;D)/(\x^\d);m) \leq 1$ for all $m \in \ZZ$, with equality if and only if $m \geq D$, it follows that $\HF(\a'/\f') \geq \HF(\LL(\d;D)/(\x^\d))$, because otherwise we would have $\a' \subseteq (\f')^{\rm sat} = \f'$. As a consequence, $\HF(S/\a) \leq \HF(S/\a'S) \leq \HF(S/\LL(\d;D))$.
\end{remark}

We now show how Theorem \ref{Theorem EGH ACI} allows to recover the Cayley-Bacharach theorem for points in $\PP^3$, which has been proved by Geramita and Kreuzer \cite[Corollary 4.4]{GK}. To do so, we introduce some notation. Let $\d=(d_1,d_2,d_3) \in \NN^{3}$, with 
$1 \leq d_1\leq d_2 \leq d_3$, and let $1 \leq D \leq  d_1+d_2+d_3-3$. Let $a \in \{1,2,3\}$ be such that $\sum_{i=1}^{a-1} (d_i-1) < D \leq \sum_{i=1}^a (d_i-1)$. We define a new sequence $\c=(c_1,c_2,c_3)$ as 
\[
c_i =\begin{cases} 1 & \text{ if } 1 \leq i < a \\
d_a - \left(D-\sum_{i=1}^{a-1} (d_i-1) \right) & \text{ if } i=a \\
d_i & \text{ if } a < i \leq 3.
\end{cases}
\]
For example, if $(\d;D) = (4,4,4;4)$, then $\c=(1,3,4)$.

\begin{corollary} \label{Corollary multiplicity ACI codim 3} Let $\a \subseteq S=\kk[x_1,\ldots, x_n]$ be an almost complete intersection of degrees $(\d;D)=(d_1,d_2,d_3;D)$, with $D \leq \sigma=d_1+d_2+d_3-3$. Then $\e(S/\a) \leq d_1d_2d_3 - c_1c_2c_3$.
\end{corollary}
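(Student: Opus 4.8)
The plan is to deduce the estimate directly from Theorem \ref{Theorem EGH ACI}, turning the whole problem into an explicit monomial count. Since that theorem gives $\HF(S/\a) \leq \HF(S/\LL(\d;D))$ and the multiplicity is read off from the Hilbert function, we immediately obtain $\e(S/\a) \leq \e(S/\LL(\d;D))$. Thus it suffices to compute $\e(S/\LL(\d;D))$ and check that it equals $d_1d_2d_3 - c_1c_2c_3$.

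First I would pin down the monomial $U_D$ explicitly. Because $U_D$ is the lex-largest monomial of degree $D$ avoiding $(\x^\d)$, it is built greedily: one loads the exponent of $x_1$ up to its cap $d_1-1$, then $x_2$ up to $d_2-1$, and so on. The hypothesis $D \leq \sigma = (d_1-1)+(d_2-1)+(d_3-1)$ guarantees that the degree $D$ fits within the caps on $x_1,x_2,x_3$, so that $U_D$ never involves $x_4,\ldots,x_n$ and has the form $U_D = x_1^{e_1}x_2^{e_2}x_3^{e_3}$ with $e_i = d_i-1$ for $i<a$, with $e_a = D - \sum_{i<a}(d_i-1)$, and with $e_i = 0$ for $i>a$, where $a$ is precisely the index appearing in the definition of $\c$. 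Comparing with the piecewise formula defining $\c$ gives the clean identity $c_i = d_i - e_i$ for each $i$, which is the key bookkeeping step.

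Next I would compute the multiplicity as a $\kk$-dimension. Since $\LL(\d;D) = \b S$ for the $(x_1,x_2,x_3)$-primary ideal $\b = (x_1^{d_1},x_2^{d_2},x_3^{d_3},U_D)$ of $\kk[x_1,x_2,x_3]$, the quotient $S/\LL(\d;D)$ is isomorphic to $(\kk[x_1,x_2,x_3]/\b)\otimes_\kk \kk[x_4,\ldots,x_n]$; comparing Hilbert polynomials of degree $n-4$ shows that $\e(S/\LL(\d;D)) = \dim_\kk(\kk[x_1,x_2,x_3]/\b)$, so the extra variables contribute nothing. The complete intersection $(\x^\d)$ has colength $d_1d_2d_3$, counted by the standard monomials $x_1^{a_1}x_2^{a_2}x_3^{a_3}$ with $a_i < d_i$; passing from $(\x^\d)$ to $\b$ kills exactly those standard monomials divisible by $U_D$, namely those with $e_i \leq a_i < d_i$ for all $i$. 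Their number is $\prod_{i=1}^3 (d_i - e_i) = \prod_{i=1}^3 c_i = c_1c_2c_3$, whence $\e(S/\LL(\d;D)) = d_1d_2d_3 - c_1c_2c_3$, and combining with the inequality from Theorem \ref{Theorem EGH ACI} finishes the proof.

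The argument is essentially a computation once Theorem \ref{Theorem EGH ACI} is available, so I do not anticipate a genuine obstacle. The one place that demands care is the identity $c_i = d_i - e_i$, i.e. verifying that the three-branch definition of $\c$ matches the greedy exponents of $U_D$ across the ranges $i<a$, $i=a$, and $i>a$; the reduction of the multiplicity to a colength in three variables is standard but worth stating explicitly.
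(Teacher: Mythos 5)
Your proposal is correct and follows essentially the same route as the paper: apply Theorem \ref{Theorem EGH ACI}, reduce to computing $\e(S/\LL(\d;D))$ after discarding the variables $x_4,\ldots,x_n$ (which do not divide $U_D$ since $D\leq\sigma$), and then count to get $d_1d_2d_3-c_1c_2c_3$. The only cosmetic difference is that the paper packages your count of standard monomials divisible by $U_D$ as the identity $(\x^\d):U_D=(\x^\c)$, which is the same computation.
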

\begin{proof}
By Theorem \ref{Theorem EGH ACI} we have that $\HF(S/\a) \leq \HF(S/\LL(\d;D))$. Therefore, in order to obtain an upper bound for the multiplicity of $S/\a$, we may replace $\a$ by $\LL(\d;D) = (\x^\d) + (U_D)$. Since $D \leq \sigma$, the variable $x_i$ does not divide $U_D$ for any $i \geq 4$. Thus, after going modulo the regular sequence $x_4,\ldots,x_n$, we may assume that $S/\LL(\d;D)$ is Artinian. With the notation introduced above, one can easily check that $(\x^\d):U_D = (\x^\c)$. It then immediately follows that $\e(S/\LL(\d;D)) = \e(S/(\x^\d)) - \e(S/((\x^\d):U_D)) = d_1d_2d_3 - c_1c_2c_3$.
\end{proof}

Now that we have obtained Corollary \ref{Corollary multiplicity ACI codim 3}, the proof of the Cayley-Bacharach theorem in $\PP^3$, Corollary \ref{Corollary P3}, is immediate. In fact, in the notation of the Corollary, let $\f\subseteq S=\kk[\PP^3]$ be a complete intersection defining $\Gamma$, and $G$ be a form of degree $D$ defining $X$. If $G \notin \f$, by Corollary \ref{Corollary multiplicity ACI codim 3} the multiplicity of $S/(\f+(G))$ is at most $d_1d_2d_3 - c_1c_2c_3$. So $X$ contains at most this number of points of $\Gamma$, which contradicts the assumptions of Corollary \ref{Corollary P3}. 

\section{Almost complete intersections and Cayley-Bacharach theorems in $\PP^n$} \label{Section ACI} 

In order to obtain a Cayley-Bacharach type theorem for points in $\PP^n$, we need to exhibit upper bounds on the multiplicity of almost complete intersections of height $n$. The strategy is to use Theorem \ref{Theorem Artinian} to first reduce to the Artinian case, and then to repeatedly apply a result on the EGH conjecture due to Francisco \cite{F}, together with some symmetry considerations on certain Hilbert functions. This combination of techniques allows us to significantly improve the known upper bounds due to Engheta \cite[Theorem 1]{E}, and later extended by Huneke, Mantero, McCullough and Seceleanu to a more general setting \cite[Theorem 2.2]{HMMCS}.

We start with an easy observation on multiplicities of unmixed ideals. Given a homogeneous ideal $I$, we let $\Assh(S/I) = \{P \in \Ass(S/I) \mid \dim(S/I) = \dim(S/P)\}$. An ideal is called unmixed if $\Assh(S/I) = \Ass(S/I)$.

\begin{remark} \label{Remark multiplicity unmixed}
If $J$ is an unmixed homogeneous ideal of height $h$, and $I$ is a homogeneous ideal of height $h$ which strictly contains $J$, then $\e(S/J) > \e(S/I)$.  In fact, there must exist $P \in \Assh(S/J) = \Ass(S/J)$ such that $J_P \subsetneq I_P$, otherwise the two ideals would coincide. As $J$ is unmixed, and $\Assh(S/I) \subseteq \Assh(S/J) = \Ass(S/J)$, the associativity formula for multiplicities (for instance, see \cite[Theorem 11.2.4]{SH}) gives 
\begin{align*}
\e(S/J) & = \sum_{P \in \Assh(S/J)} \e(S/P) \ell((S/J)_P) \\
& > \sum_{P \in \Assh(S/J)} \e(S/P) \ell((S/I)_P)  \geq \sum_{Q \in \Assh(S/I)} \e(S/Q) \ell((S/I)_Q) = \e(S/I).
\end{align*}
\end{remark}

\begin{notation}
Let $\d=(d_1,\ldots,d_h)$, with $1 \leq d_1 \leq \cdots \leq d_h$. For $m \geq 2$, we consider the $\d$-LPP ideal $\LL(\d;m-1)$ inside $\ov{S}=\kk[x_1,\ldots,x_h]$. Let $\sigma=\sum_{i=1}^h(d_i-1)$, and define 
\[
\varphi_m = \begin{cases} \HF(\ov{S}/(\x^\d);m) - \HF(\ov{S}/\LL(\d;m-1);m) & \text{ if } 2\leq m \leq \sigma\\
0 & \text{ otherwise. }
\end{cases}
\]
\end{notation} 
Clearly, $\varphi_m$ only depends on $m$ and on the given sequence $(\d;D)$. Moreover, observe that for $2 \leq m \leq \sigma$ we have $\varphi_m = \HF(\LL(\d;m-1)/(\x^\d);m)=h-\dim_\kk(((\x^\d) \cap (U_{m-1}))_m)$. In particular, $\varphi_m>0$ for $2 \leq m \leq \sigma$.

\begin{theorem} \label{THM multiplicity}
Let $\a \subseteq S=\kk[x_1,\ldots,x_n]$ be an Artinian almost complete intersection of degrees $(\d;D)=(d_1,\ldots,d_n;D)$, with $D \leq \sigma = \sum_{i=1}^n(d_i-1)$. Then $\HF(S/\a;m) \leq \HF(S/(\x^\d);m)-\varphi_m$ for all $D < m \leq \sigma$. In particular, $\e(S/\a) \leq \prod_{i=1}^n d_i - \sum_{m=D+1}^{\sigma} \varphi_m - 1$. 
\end{theorem}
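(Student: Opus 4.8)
The plan is to translate both conclusions into statements about the cyclic module $\a/\f = (\f+(G))/\f$, and then to obtain the degree-by-degree Hilbert function bound by reducing, in each degree separately, to Francisco's single-degree estimate \cite{F}. Since $S/\a$ is Artinian, $\e(S/\a) = \dim_\kk(S/\a) = \sum_m \HF(S/\a;m)$, and likewise $\prod_{i=1}^n d_i = \dim_\kk(S/\f) = \sum_m \HF(S/(\x^\d);m)$, because $\f$ and $(\x^\d)$ are complete intersections of the same degrees. Subtracting, the multiplicity bound becomes equivalent to $\dim_\kk(\a/\f) \geq \sum_{m=D+1}^\sigma \varphi_m + 1$. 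Moreover, unravelling the definition of $\varphi_m$ gives $\HF(S/\LL(\d;m-1);m) = \HF(S/(\x^\d);m) - \varphi_m$ for $D < m \leq \sigma$, so the Hilbert function inequality I must prove reads simply $\HF(S/\a;m) \leq \HF(S/\LL(\d;m-1);m)$, equivalently $\HF(\a/\f;m) \geq \varphi_m$.

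For the Hilbert function bound, fix $m$ with $D < m \leq \sigma$. The key move is to produce an auxiliary almost complete intersection whose extra generator sits in degree $m-1$, so that Francisco's theorem applies precisely in degree $m$. Concretely, I would choose a form $G' \in \a_{m-1} \smallsetminus \f_{m-1}$ and set $\a' = \f + (G')$. Then $\a'$ is an Artinian almost complete intersection of degrees $(\d;m-1)$ contained in $\a$, so $\HF(S/\a;m) \leq \HF(S/\a';m)$, while Francisco's estimate in the degree one above the extra generator gives $\HF(S/\a';m) \leq \HF(S/\LL(\d;m-1);m)$. Chaining these two inequalities yields the desired bound in degree $m$.

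The step that makes this work — and the main obstacle — is guaranteeing that such a $G'$ exists for every $m$ in the range, i.e.\ that $\HF(\a/\f;m-1) \geq 1$ for all $D \leq m-1 \leq \sigma$. I would prove the stronger fact that $\HF(\a/\f;t) \geq 1$ for every $D \leq t \leq \sigma$ using the Gorenstein structure of $A = S/\f$. Writing $\bar{G} \in A_D$ for the image of $G$, the submodule $\bar{G}A \cong \a/\f$ is nonzero, hence contains the simple essential socle $\soc(A) = A_\sigma$; thus $\HF(\a/\f;\sigma) \geq 1$. If $\HF(\a/\f;t) = 0$ for some $t$ in the range, then $\bar{G}A_{t-D} = 0$, and since $A$ is standard graded this forces $\bar{G}A_{s-D} = 0$ for all $s \geq t$, in particular in degree $\sigma$ — a contradiction. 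This gives non-vanishing throughout $[D,\sigma]$, which both supplies the generators $G'$ above and, together with $\HF(\a/\f;D) \geq 1$ (as $G \notin \f$) and the degree-by-degree bound $\HF(\a/\f;m) \geq \varphi_m$, yields $\dim_\kk(\a/\f) = \sum_{t=D}^\sigma \HF(\a/\f;t) \geq 1 + \sum_{m=D+1}^\sigma \varphi_m$. This is exactly the multiplicity bound, completing the argument.
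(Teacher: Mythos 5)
Your proposal is correct in substance and follows the same core strategy as the paper: for each degree $m$ with $D<m\leq\sigma$, pass to an auxiliary almost complete intersection $\f+(G')$ with $\deg G'=m-1$ contained in $\a$, and invoke Francisco's single-degree estimate \cite[Corollary 5.2]{F} one degree above the added form. Where you genuinely diverge is in how the auxiliary element $G'$ is produced. The paper runs an induction on $\sigma-D$: Francisco's inequality in degree $D+1$, combined with the observation that $\varphi_{D+1}>0$, forces $\HF(\a;D+1)>\HF(\f;D+1)$, which hands you the next generator, and one repeats. You instead prove up front that $\HF(\a/\f;t)\geq 1$ for every $D\leq t\leq\sigma$ using the fact that $S/\f$ is Artinian Gorenstein, so its one-dimensional socle $A_\sigma$ is contained in every nonzero ideal, and vanishing of $\bar G A_{t-D}$ in one degree propagates upward. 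That argument is correct and arguably cleaner: it decouples the existence of $G'$ from Francisco's theorem, subsumes the paper's separate base case $\sigma-D=1$ (which is exactly the socle-containment observation), and your bookkeeping for the multiplicity bound via $\dim_\kk(\a/\f)\geq 1+\sum_{m=D+1}^\sigma\varphi_m$ is equivalent to the paper's final summation.

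One point you should patch. The paper treats the case $D<d_1$ separately and there avoids Francisco entirely: it builds a full regular sequence of degrees $(D,d_2,\ldots,d_n)$ inside $\a$ starting from $G$, and compares with the complete intersection $(\x^\d)+(x_1^D)$, which contains $\LL(\d;m-1)$ for all $m$ in the range. In your uniform treatment, when $D<d_1$ and $D<m\leq d_1$ you apply Francisco's corollary to $\f+(G')$ with $\deg G'=m-1<d_1$, i.e.\ with the added form of degree strictly below the smallest degree in the regular sequence; you should check that the cited statement actually covers this configuration. If it does not, the gap is easily repaired by the paper's complete-intersection argument (or by noting that $\f+(G')$ then contains a regular sequence of degrees $(m-1,d_2,\ldots,d_n)$), but as written this hypothesis goes unverified.
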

\begin{proof}

Without loss of generality we may assume that $\kk$ is infinite. We first prove the inequality on Hilbert functions.

We start by treating the case $D<d_1$. Under this assumption, we can find a regular sequence $f_1',\ldots,f_n'$ of degrees $\d'=(D,d_2,\ldots,d_n)$ inside $\a$. To see this, pick $G$ as the first element $f_1'$. Since $\a_{\leq d_2}$ has height at least two, we may find an element $f_2'$ of degree $d_2$ which is regular modulo $f_1'$. Proceding this way, we construct an ideal $\f' \subseteq \a$ generated by a regular sequence of degrees $\d'$. Observe that $\HF(S/\a) \leq \HF(S/\f') = \HF(S/(\x^{\d'}))$. Moreover, since $(\x^{\d'}) = (\x^\d) + (x_1^D)$ is a $\d$-LPP almost complete intersection, we have that $\LL(\d;m-1) \subseteq (\x^{\d'})$ for all $D< m \leq \sigma$. As a consequence, $\HF(S/\a;m) \leq \HF(S/\LL(\d;m-1);m) = \HF(S/(\x^\d);m) - \varphi_m$ for all $D< m \leq \sigma$. 

Now assume that $D \geq d_1$. If $D=\sigma$, there is nothing to show, so we may assume that $D<\sigma$. We proceed by induction on $\sigma' = \sigma-D \geq 1$. Assume that $\sigma'=1$, and observe that $\varphi_\sigma=1$. Since $S/\f$ is an Artinian complete intersection, and $\f \subsetneq \a$, the socle of $S/\f$ must be contained in $\a/\f$. Thus $\HF(S/\a;\sigma) = 0 = \HF(S/\f;\sigma) - 1 = \HF(S/(\x^\d);\sigma)- \varphi_\sigma$, and the desired inequality holds in this case.

Assume $\sigma'>1$. By \cite[Corollary 5.2]{F}, we get $\HF(S/\a;D+1)  \leq \HF(S/\LL(\d;D);D+1)  = \HF(S/(\x^\d);D+1) - \varphi_{D+1}$. We have already observed that $\varphi_{D+1} > 0$, since $D+1 \leq \sigma$. In particular, the above inequality implies that $\HF(\a;D+1) > \HF(\f;D+1)$. Therefore, there exists an element $G' \in \a$ of degree $D+1$ which does not belong to $\f$. Let $\a' = \f + (G')$, which is an almost complete intersection of degrees $(\d;D+1)$. By induction, and because $\a'\subseteq \a$, we have that $\HF(S/\a;m) \leq \HF(S/\a';m) \leq \HF(S/(\x^\d);m) - \varphi_m$ for all $D+1 < m \leq \sigma$, and this concludes the proof of the claimed inequalities on Hilbert function. 

Finally, to obtain the inequality for the multiplicity, it is sufficient to observe that 
\begin{align*}
\e(S/\a) &= \sum_{m=0}^\sigma \HF(S/\a;m) = \sum_{m=0}^{D} \HF(S/\f;m) -1 + \sum_{m=D+1}^\sigma \HF(S/\a) \\
& \leq \sum_{m=0}^\sigma \HF(S/(\x^\d);m)  - \sum_{m=D+1}^\sigma \varphi_m -1 = \prod_{i=1}^n d_i - \sum_{m=D+1}^\sigma \varphi_m -1. \qedhere
\end{align*}
\end{proof}

\begin{remark} \label{Remark HMMCS} 
The bound obtained in Theorem \ref{THM multiplicity}, together with Remark \ref{Remark multiplicity unmixed}, recovers and improves the one given in \cite{HMMCS} and \cite{E}. In fact, by Theorem \ref{Theorem Artinian} we can first of all reduce to the Artinian case. If $D < \sigma$ then $\sum_{m=D+1}^{\sigma} \varphi_m \geq \sigma -D$, and thus $\prod_{i=1}^h d_i - \sum_{m=D+1}^{\sigma} \varphi_m - 1 \leq \prod_{i=1}^h d_i -\sigma + D-1$, which is the bound given in \cite{E,HMMCS}. When $D \geq \sigma$, the results in \cite{E,HMMCS} just give that $\e(S/\a)\leq \prod_{i=1}^h d_i-1$, which is the bound given by Remark \ref{Remark multiplicity unmixed}. Observe that, in the case $D \geq \sigma$, the bound $\e(S/\a) \leq \prod_{i=1}^h d_i-1$ is also the one predicted by the EGH conjecture.
\end{remark}

We now further improve the bound of Theorem \ref{THM multiplicity} by using that, if $\a=\f+(G)$ is an almost complete intersection, then the ideal $\g=\f:\a$ defines a Gorenstein ring, hence it has symmetric Hilbert function.

\begin{theorem} \label{Theorem symmetric}
Let $\a=\f+(G) \subseteq S=\kk[x_1,\ldots,x_n]$ be an almost complete intersection of degrees $(\d;D)=(d_1,\ldots,d_h;D)$, with $D < \sigma = \sum_{i=1}^h(d_i-1)$. Let $\tau_- = \lfloor \frac{\sigma+D-1}{2} \rfloor$ and $\tau^+ = \lceil \frac{\sigma+D-1}{2} \rceil$. Then $\e(S/\a) \leq \prod_{i=1}^h d_i - \sum_{m=D+1}^{\tau_-} \varphi_m - \sum_{m=D+1}^{\tau^+} \varphi_m-2$. 
\end{theorem}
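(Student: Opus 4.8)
The plan is to reduce to the Artinian case and then sharpen the estimate of Theorem~\ref{THM multiplicity} using the symmetry of the Gorenstein ideal linked to $\a$. First I would invoke Theorem~\ref{Theorem Artinian} to assume that $\kk$ is infinite and that $\a=\f+(G)$ is Artinian inside $\ov{S}=\kk[x_1,\ldots,x_h]$: the asserted bound depends only on $(\d;D)$, and $\e(S/\a)\le\e(\ov{S}/\ov{\a})$. Setting $\psi_m=\HF(S/\f;m)-\HF(S/\a;m)$ and using $\HF(S/\f;m)=\HF(S/(\x^\d);m)$, one gets $\e(S/\a)=\prod_{i=1}^h d_i-\sum_{m=D}^{\sigma}\psi_m$, where $\psi_m=0$ for $m<D$ and $\psi_D=1$ (since $\a_D=\f_D+\kk G$ with $G\notin\f$). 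Theorem~\ref{THM multiplicity} yields $\psi_m\ge\varphi_m$ for all $D<m\le\sigma$, and its multiplicity bound is obtained simply by inserting these inequalities into $\sum_{m=D}^{\sigma}\psi_m$; the improvement must therefore come from elsewhere.

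The extra input is the link $\g=\f:\a$. Since $\a$ is an almost complete intersection and $\f$ a complete intersection, $S/\g$ is Gorenstein, and the linkage formula used in the proof of Theorem~\ref{Theorem EGH ACI} gives $\psi_m=\HF(S/\g;\sigma-m)$. From $\psi_m=0$ for $m<D$ and $\psi_D=1$ one reads off that the socle degree of $S/\g$ is exactly $\sigma-D$, so the Gorenstein symmetry $\HF(S/\g;k)=\HF(S/\g;\sigma-D-k)$ translates into
\[
\psi_m=\psi_{\sigma+D-m}\qquad\text{for all }m.
\]
Thus the reflection $m\mapsto\sigma+D-m$ preserves the interval $[D,\sigma]$ and in particular pairs $\psi_D=\psi_\sigma=1$.

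With this symmetry I would split $[D,\sigma]$ into the two reflected blocks $\{D,\ldots,\tau_-\}$ and $\{\tau^++1,\ldots,\sigma\}$, which carry equal $\psi$-sums because $\tau_-+\tau^+=\sigma+D-1$, together with the middle block $\{\tau_-+1,\ldots,\tau^+\}$ (a single degree when $\sigma+D$ is even, empty when $\sigma+D$ is odd). This gives
\[
\sum_{m=D}^{\sigma}\psi_m=2\sum_{m=D}^{\tau_-}\psi_m+\sum_{m=\tau_-+1}^{\tau^+}\psi_m\ge 2\Big(1+\sum_{m=D+1}^{\tau_-}\varphi_m\Big)+\sum_{m=\tau_-+1}^{\tau^+}\varphi_m,
\]
using $\psi_D=1$ and $\psi_m\ge\varphi_m$ for $m>D$. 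Rewriting the right-hand side as $\sum_{m=D+1}^{\tau_-}\varphi_m+\sum_{m=D+1}^{\tau^+}\varphi_m+2$ and subtracting from $\prod_{i=1}^h d_i$ produces exactly the claimed inequality; the gain of $2$ over Theorem~\ref{THM multiplicity} is precisely the reflected pair $\psi_D=\psi_\sigma=1$.

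The main obstacle is the symmetry step: one must confirm that $\g=\f:\a$ defines a Gorenstein ring whose socle degree is $\sigma-D$, so that $\psi$ is genuinely symmetric about $\tfrac{\sigma+D}{2}$, and then track the parity of $\sigma+D$ carefully so that the middle block lines up correctly with $\tau_-$ and $\tau^+$. Once the symmetry is established, the remainder is the bookkeeping of reflecting the bounds $\psi_m\ge\varphi_m$ across the center, which is routine.
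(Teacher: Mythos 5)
Your proposal is correct and follows essentially the same route as the paper: reduce to the Artinian case via Theorem~\ref{Theorem Artinian}, use the linkage isomorphism $\a/\f \cong S/\g(-D)$ with $\g=\f:\a$ Gorenstein to get the symmetry of $\HF(\a/\f)$ about $\frac{\sigma+D}{2}$, and then combine the degreewise bounds $\HF(\a/\f;m)\geq\varphi_m$ from Theorem~\ref{THM multiplicity} with the reflected pair $\HF(\a/\f;D)=\HF(\a/\f;\sigma)=1$ to gain the extra $2$. Your bookkeeping with the blocks $\{D,\ldots,\tau_-\}$, $\{\tau_-+1,\ldots,\tau^+\}$, $\{\tau^++1,\ldots,\sigma\}$ is just a slightly more explicit version of the paper's identity $\e(S/\g)=\sum_{m=D}^{\tau_-}\HF(\a/\f;m)+\sum_{m=D}^{\tau^+}\HF(\a/\f;m)$.
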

\begin{proof}
We may assume that $\kk$ is infinite and, by Theorem \ref{Theorem Artinian}, that $S/\a$ is Artinian. Let $\g=\f:\a$. Since $\a/\f \cong S/\g(-D)$, and $S/\g$ is Gorenstein, we have that $\HF(\a/\f;D+m) = \HF(\a/\f;\sigma-m)$ for all $m \in \ZZ$. By Theorem \ref{THM multiplicity} we have that $\HF(\a/\f;m) = \HF(S/\f;m) - \HF(S/\a;m) \geq \varphi_m$ for all $m \geq D+1$. Therefore
\begin{align*}
\e(S/\g) & = \sum_{m=D}^{\tau_-} \HF(\a/\f;m) + \sum_{m=D}^{\tau^+} \HF(\a/\f;m) \\
& \geq \sum_{m=D+1}^{\tau_-} \varphi_m  + \sum_{m=D+1}^{\tau^+} \varphi_m + 2.
\end{align*}
Since $\e(S/\a) = \e(S/\f) - \e(S/\g) = \prod_{i=1}^h d_i  - \e(S/\g)$, the proof is complete.
\end{proof}

\begin{remark}
Since the function $m \mapsto \varphi_m$ is non-increasing for $m \geq 2$, Theorem \ref{Theorem symmetric} always provides a bound at least as effective at the one of Theorem \ref{THM multiplicity}.
\end{remark}

We can finally state the main result of this section. 

\begin{theorem} \label{THM Pn}
Let $\Gamma$ be a complete intersection of degrees $\d=(d_1,\ldots,d_n)$ in $\PP^n$, and $X$ be a hypersurface of degree $D$, with $1 \leq D \leq \sigma = \sum_{i=1}^n(d_i-1)$. Set $\delta(\d;D) = \prod_{i=1}^n d_i - \sum_{m=D+1}^{\tau_-} \varphi_m - \sum_{m=D+1}^{\tau^+} \varphi_m - 1$. If $X$ contains at least $\delta(\d;D)$ points of $\Gamma$, then $X$ contains $\Gamma$.
\end{theorem}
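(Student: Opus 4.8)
The plan is to convert this geometric statement into the multiplicity estimate of the previous subsection and argue by contradiction, exactly as was done for $\PP^3$ in the passage following Corollary \ref{Corollary multiplicity ACI codim 3}. First I would fix homogeneous coordinates so that $S = \kk[x_0,\ldots,x_n]$ is the coordinate ring of $\PP^n$, let $\f = (f_1,\ldots,f_n)$ be the complete intersection ideal cutting out $\Gamma$ (a reduced set of $\prod_{i=1}^n d_i$ points, by B\'ezout), and let $G$ be a form of degree $D$ with $X = V(G)$. Suppose, toward a contradiction, that $X$ does not contain $\Gamma$; then $G \notin \f$, and $\a = \f + (G)$ is an almost complete intersection of degrees $(\d;D)$ in the sense of Definition \ref{Defn ACI}. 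Its height is exactly $n$: it contains $\f$, which has height $n$, and since $X$ meets $\Gamma$ (it contains at least $\delta(\d;D) \geq 1$ points) the scheme $V(\a) = \Gamma \cap X$ is non-empty, so $\a$ cannot be $\m$-primary.

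The crucial link between geometry and algebra is that the number of points of $\Gamma$ lying on $X$ equals $\e(S/\a)$. Indeed, $S/\a$ is one-dimensional, so its multiplicity is the degree of the $0$-dimensional projective scheme $\Gamma \cap X$; and since $\Gamma$ is reduced, at each point $P \in \Gamma$ the local ring of $\Gamma \cap X$ is either zero (if $G(P) \neq 0$) or the residue field (if $G(P) = 0$), so $\Gamma \cap X$ is reduced and $\e(S/\a)$ counts exactly the distinct points of $\Gamma$ on $X$. Now I would invoke the multiplicity bound proved for almost complete intersections: when $D < \sigma$, Theorem \ref{Theorem symmetric} gives
\[
\e(S/\a) \leq \prod_{i=1}^n d_i - \sum_{m=D+1}^{\tau_-} \varphi_m - \sum_{m=D+1}^{\tau^+} \varphi_m - 2 = \delta(\d;D) - 1.
\]
Therefore $X$ contains at most $\delta(\d;D) - 1$ points of $\Gamma$, contradicting the hypothesis, and so $X \supseteq \Gamma$.

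It remains to treat the boundary value $D = \sigma$, which is permitted in the statement but falls outside the range of Theorem \ref{Theorem symmetric}. In this case $\tau_- = \sigma - 1$ and $\tau^+ = \sigma$, so both $\varphi$-sums are empty and $\delta(\d;\sigma) = \prod_{i=1}^n d_i - 1$; the required bound $\e(S/\a) \leq \prod_{i=1}^n d_i - 1$ then follows directly from Remark \ref{Remark multiplicity unmixed}, applied to the unmixed complete intersection $\f$ of height $n$ and the strictly larger height-$n$ ideal $\a$. I expect the substantive mathematical work to be contained entirely in Theorem \ref{Theorem symmetric}; within the present proof the only points requiring care are the scheme-theoretic identification $\e(S/\a) = |\Gamma \cap X|$, which relies on $\Gamma$ being reduced so that no embedded or multiplicity contributions inflate the count, and the harmless remark that passing from $\kk[x_1,\ldots,x_n]$ to the $(n+1)$-variable coordinate ring of $\PP^n$ does not affect any of the cited results, since they depend only on the height $h = n$.
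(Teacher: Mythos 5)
For $D < \sigma$ your argument is correct and is exactly the one the paper has in mind (and omits): pass to the almost complete intersection $\a = \f + (G)$, observe that the number of points of $\Gamma$ lying on $X$ is at most $\e(S/\a)$, and apply Theorem \ref{Theorem symmetric} to get $\e(S/\a) \leq \delta(\d;D) - 1$, contradicting the hypothesis. Your side remarks about reducedness of $\Gamma \cap X$ and about the number of variables are fine (and for the contradiction one only needs the inequality $|\Gamma \cap X| \leq \e(S/\a)$, not equality).

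The boundary case $D = \sigma$ is where your proof has a genuine gap, and it cannot be repaired. To run the contradiction you must show that $X$ contains at most $\delta(\d;\sigma) - 1 = \prod_{i=1}^n d_i - 2$ points of $\Gamma$, whereas Remark \ref{Remark multiplicity unmixed} only yields $\e(S/\a) \leq \prod_{i=1}^n d_i - 1 = \delta(\d;\sigma)$; you have silently shifted the target from $\delta - 1$ to $\delta$, and with the weaker bound no contradiction arises. In fact no argument can close this gap, because the statement is false at $D = \sigma$: over a field of characteristic different from $2$, let $\Gamma = \{(\pm 1 : \pm 1 : 1)\} \subseteq \PP^2$ be the complete intersection of $x^2 - z^2$ and $y^2 - z^2$ (so $\d = (2,2)$, $\sigma = 2$, and $\delta(\d;2) = 4 - 1 = 3$ since both $\varphi$-sums are empty), and let $X$ be the conic $(x - z)(y - z) = 0$; then $X$ contains exactly three of the four points of $\Gamma$ but not the fourth. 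The hypothesis $D \leq \sigma$ in the statement of Theorem \ref{THM Pn} is evidently a slip --- Theorem \ref{THMX Pn} in the introduction and Theorem \ref{Theorem symmetric} both require $D < \sigma$ --- so the correct move is to exclude $D = \sigma$ rather than to patch it.
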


We omit the proof since the strategy is the same as in the case of  $\PP^3$, outlined at the end of Section \ref{Section Gorenstein}. By Theorem \ref{Theorem symmetric}, in fact, we may choose $\delta(\d;D)-1$ as an upper bound for the multiplicity of any almost complete intersection of degrees $(\d;D)$. 

\begin{example} Let $\Gamma \subseteq \PP^{2n}$ be a complete intersection of $2n$ cubics. If $D=3$, with the notation of Theorem \ref{Theorem symmetric} we have that $\sigma=4n$, $\tau_- = \tau^+ = 2n+1$, and $\sum_{m=4}^{2n+1} \varphi_m=3n^2-4n+1$. Therefore, if $X$ is a cubic containing at least $3^{2n} - (6n^2-8n+3)$ points of $\Gamma$, then $X$ contains $\Gamma$. For instance, if $\Gamma$ is a complete intersection of four cubics in $\PP^4$, and $X$ is a cubic containing at least $\delta(3,3,3,3;3) = 70$ points of $\Gamma$, then it contains all $81$ points of $\Gamma$. Observe that the optimal value given by the EGH conjecture in this case would be $64$.
\end{example} 

We conclude the section showing that, if either $h = 3$, or $h \geq 4$ and $D < d_4$, then we can improve Theorem \ref{THM Pn} using the results from Section \ref{Section Gorenstein}. In fact, with the notation of Section \ref{Section Gorenstein}, if $h=3$ one can take $\delta(\d;D) = d_1d_2d_3 - c_1c_2c_3+1$, by Corollary \ref{Corollary multiplicity ACI codim 3}. This is a more convenient choice than the value of $\delta(\d;D)$ coming from Theorem \ref{THM Pn}, since it comes from the sharper estimates of Section \ref{Section Gorenstein} on Hilbert functions, which only work for almost complete intersections of height three.  If $h \geq 4$ and $D < d_4$, we have the following theorem.

\begin{theorem} \label{Theorem delta2}
Let $\a =\f+(G)\subseteq S = \kk[x_1,\ldots,x_n]$ be an almost complete intersection of degrees $(\d;D) = (d_1,\ldots,d_h;D)$. Assume that $h \geq 4$ and $D < d_4$. Let $\sigma=\sum_{i=1}^h(d_i-1)$, $\tau_- = \lfloor \frac{\sigma+D-1}{2} \rfloor$ and $\tau^+ = \lceil \frac{\sigma+D-1}{2} \rceil$. Consider the ideal $\LL(\d;D)$ inside $\ov{S} = \kk[x_1,\ldots,x_h]$, and let
\[
\delta_m = \begin{cases} \HF(\ov{S}/(\x^\d);m) - \HF(\ov{S}/\LL(\d;D);m) & \text{ for } 0 \leq m \leq d_4 \\
\varphi_m & \text{ otherwise. }
\end{cases}
\]
Then $\e(S/\a) \leq \prod_{i=1}^h d_i - \sum_{m=D+1}^{\tau_-} \delta_m - \sum_{m=D+1}^{\tau^+} \delta_m-2$. 
\end{theorem}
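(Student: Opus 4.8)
The plan is to follow the proof of Theorem \ref{Theorem symmetric} almost verbatim, replacing only its low-degree estimate by the sharper one that Section \ref{Section Gorenstein} provides for height-three almost complete intersections. First I would assume $\kk$ infinite and invoke Theorem \ref{Theorem Artinian} to reduce to the case where $\a = \f+(G)$ is Artinian inside $\ov S = \kk[x_1,\ldots,x_h]$; since $\e(S/\a) \le \e(\ov S/\a)$ it suffices to bound the latter. As in Theorem \ref{Theorem symmetric}, set $\g = \f:\a$, so that $\a/\f \cong (\ov S/\g)(-D)$ with $\ov S/\g$ Gorenstein, whence $\HF(\a/\f;D+k) = \HF(\a/\f;\sigma-k)$ for all $k$, and $\e(\ov S/\g) = \sum_{m=D}^{\tau_-}\HF(\a/\f;m) + \sum_{m=D}^{\tau^+}\HF(\a/\f;m)$. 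Because $\e(\ov S/\a) = \prod_{i=1}^h d_i - \e(\ov S/\g)$, the theorem follows once I establish the pointwise bounds $\HF(\a/\f;D)\ge 1$, then $\HF(\a/\f;m)\ge \delta_m$ for $D+1\le m\le d_4$, and finally $\HF(\a/\f;m)\ge \varphi_m = \delta_m$ for $d_4 < m \le \sigma$. The last of these is exactly Theorem \ref{THM multiplicity}, so the whole content lies in the middle range.

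The heart of the argument is thus the claim that $\HF(\ov S/\a;m) \le \HF(\ov S/\LL(\d;D);m)$ for $D\le m\le d_4$, which is equivalent to $\HF(\a/\f;m)\ge \delta_m$ since $\HF(\ov S/\f;m)=\HF(\ov S/(\x^\d);m)$. The key observation is that, as $\deg f_i = d_i \ge d_4$ for $i\ge 4$, in every degree $m < d_4$ the ideal $\a$ agrees with $\a' = (f_1,f_2,f_3)+(G)$; moreover, because $D<d_4$, the monomial $U_D$ involves no variable $x_i$ with $i\ge 5$, and $\LL((d_1,d_2,d_3);D)$ and $\LL(\d;D)$ have the same Hilbert function in all degrees $<d_4$. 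When $\a'$ is a genuine height-three almost complete intersection I would apply Theorem \ref{Theorem EGH ACI} (or Remark \ref{Remark over socle}, should $D > d_1+d_2+d_3-3$) to get $\HF(\ov S/\a';m)\le \HF(\ov S/\LL((d_1,d_2,d_3);D);m)$, and combine it with the previous sentence. When instead $f_1,f_2,f_3,G$ form a regular sequence, $\a'$ is a complete intersection of degrees $(d_1,d_2,d_3,D)$ and the same inequality follows by a direct comparison of the two Hilbert functions: both reduce to $\HF(\ov S/(\x^{(d_1,d_2,d_3)}))$ minus a shifted copy, and the copy subtracted for $\LL$ is the smaller one since it is governed by the larger ideal $(\x^{(d_1,d_2,d_3)}):U_D \supseteq (\x^{(d_1,d_2,d_3)})$.

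The boundary degree $m=d_4$ needs a separate touch, because there $x_4^{d_4}\in\LL(\d;D)$ forces $\HF(\ov S/\LL(\d;D);d_4) = \HF(\ov S/\LL((d_1,d_2,d_3);D);d_4)-1$. Here I would use that $\a\supseteq \a'+(f_4)$ and that $f_4\notin\a'$ whenever $\a$ is not itself a complete intersection; then passing from $\a'$ to $\a'+(f_4)$ drops the Hilbert function in degree $d_4$ by at least one, which exactly absorbs that discrepancy. The one genuinely exceptional situation is $f_4\in\a'$, for then $\a = (f_1,f_2,f_3,G,f_5,\ldots,f_h)$ is a complete intersection of degrees $(d_1,d_2,d_3,D,d_5,\ldots,d_h)$; this case I would dispose of by the direct evaluation $\e(\ov S/\a) = D\prod_{i\ne 4}d_i$ together with an elementary check that this does not exceed the asserted bound (it is in fact sharp in small examples, e.g. $\d=(2,2,2,2)$, $D=1$). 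Feeding the three pointwise bounds into the symmetric sum then gives $\e(\ov S/\g)\ge 2 + \sum_{m=D+1}^{\tau_-}\delta_m + \sum_{m=D+1}^{\tau^+}\delta_m$, and the conclusion.

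The main obstacle I anticipate is exactly the height bookkeeping for $\a'$ and the boundary degree $m=d_4$: one must make sure the sharp height-three estimate really governs $\a$ throughout $D<m\le d_4$, carefully separating the three possibilities (the honest height-three almost complete intersection, the height-four complete intersection $\a'$, and the fully degenerate case where $\a$ itself is a complete intersection), and verifying that the single unit lost to $x_4^{d_4}$ at $m=d_4$ is always recovered from $f_4$. Everything else is the formal symmetry computation already carried out in Theorem \ref{Theorem symmetric}.
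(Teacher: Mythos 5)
Your proposal follows the same route as the paper: reduce to the Artinian case via Theorem \ref{Theorem Artinian}, feed pointwise lower bounds for $\HF(\a/\f;m)$ into the symmetric sum for $\e(S/\g)$ exactly as in Theorem \ref{Theorem symmetric}, use Theorem \ref{THM multiplicity} for $m>d_4$, and split the range $m\leq d_4$ according to whether $f_1,f_2,f_3,G$ form a regular sequence, invoking Theorem \ref{Theorem EGH ACI} together with Remark \ref{Remark over socle} in the non-regular case. The only real divergence is cosmetic: in the regular-sequence case the paper compares $(\x^{\d'})$ with $\LL(\d;D)$ via \cite[Theorem 1.2]{MP}, whereas you do a direct linkage-style comparison using $(\x^{(d_1,d_2,d_3)}):U_D\supseteq(\x^{(d_1,d_2,d_3)})$; both work, and your separate ``fully degenerate'' case ($f_4\in\a'$) is in fact subsumed in the regular-sequence case, since $f_4\in(f_1,f_2,f_3,G)$ forces $(f_1,f_2,f_3,G)$ to have height four.

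There is, however, one concrete slip at the boundary degree $m=d_4$. You assert that $x_4^{d_4}\in\LL(\d;D)$ accounts for a discrepancy of exactly one unit between $\HF(\ov S/\LL(\d;D);d_4)$ and $\HF(\ov S/\LL((d_1,d_2,d_3);D);d_4)$, and you recover that single unit from $f_4\notin\a'$. But if several of the degrees $d_4,\ldots,d_h$ coincide with $d_4$ --- say $s=\max\{j\geq 4\mid d_j=d_4\}$ with $s>4$ --- then $x_4^{d_4},\ldots,x_s^{d_4}$ all enter $\LL(\d;D)$ in degree $d_4$ and the discrepancy is $s-3$, so the one unit supplied by $f_4$ is not enough and your bound in degree $d_4$ falls short by $s-4$. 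The repair is exactly the device the paper uses: the forms $f_4,\ldots,f_s$ are linearly independent modulo $\b=(f_1,f_2,f_3,G)$ in degree $d_4$ (a nontrivial relation $\sum\lambda_if_i\equiv cG \pmod{(f_1,f_2,f_3)}$ would force either $G\in\f$ or a contradiction with $f_1,\ldots,f_s$ being a regular sequence), hence $\HF(\a/\b;d_4)=s-3$, which absorbs the full discrepancy. With that correction your argument closes.
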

\begin{proof}
We can assume that $\kk$ is infinite and, by Theorem \ref{Theorem Artinian}, that $S=\ov{S}$ and $h=n$. 

We start by showing that $\HF(S/\a;m) \leq \HF(S/(\x^\d);m) - \delta_m$ for all $m \in \ZZ$. As in the proof of Theorem \ref{Theorem symmetric}, this will yield the desired upper bound for $\e(S/\a)$ since the Hilbert function of $\a/\f$ is symmetric and $\e(S/(\x^\d)) = \prod_{i=1}^n d_i$.

Observe that $\HF(S/\a;m) \leq \HF(S/(\x^\d);m) - \delta_m$ is true for $m>d_4$ by Theorem \ref{THM multiplicity}. Therefore, it suffices to focus on the inequality in degrees $0 \leq m \leq d_4$.

Let $s=\max\{j \geq 4 \mid d_j=d_4\}$. First, assume that the elements $f_1,f_2,f_3,G$ form a regular sequence. Then $\a$ contains a regular sequence $f_1',\ldots,f_n'$ of degrees $\d'=(d_1,d_2,d_3,D,d_5,\ldots,d_n)$. We have $\HF(S/\a) \leq \HF(S/(\x^{\d'})) \leq \HF(S/\LL(\d;D))$, because $\LL(\d;D) \subseteq \LL$, where $\LL$ is the $\d$-LPP ideal with the same Hilbert function as $(\x^{\d'})$, which exists by \cite[Theorem 1.2]{MP}. By definition, $\HF(S/\LL(\d;D);m) = \HF(S/(\x^\d);m)-\delta_m$ for all $0 \leq m \leq d_4$.

If the elements $f_1,f_2,f_3,G$ do not form a regular sequence, then $\b=(f_1,f_2,f_3,G)$ is an almost complete intersection of degrees $(\d'';D) = (d_1,d_2,d_3;D)$. Using Theorem \ref{Theorem EGH ACI} and Remark \ref{Remark over socle} we have that $\HF(S/\b) \leq \HF(S/\LL(\d'';D))$. For all $m < d_4$ we conclude that $\HF(S/\a;m) = \HF(S/\b;m) \leq \HF(S/\LL(\d'';D);m) = \HF(S/\LL(\d;D);m) =\HF(S/(\x^\d);m) - \delta_m$. For $m=d_4$, observe that the elements $f_4,\ldots,f_s$ are all minimal generators of $\a_{\leq d_4}$, so that $\HF(\a/\b;d_4) = s-3$. As a consequence, we get that $\HF(S/\a;d_4) = \HF(S/\b;d_4) - \HF(\a/\b;d_4) \leq \HF(S/\LL(\d'';D);d_4) - (s-3) = \HF(S/\LL(\d;D);d_4)$. 
\end{proof}

\begin{remark} \label{Remark delta2}
Theorem \ref{Theorem delta2} shows that, in the case $h \geq 4$ and $D < d_4$, we may replace the value $\delta(\d;D)$ from Theorem \ref{THM Pn} with $\prod_{i=1}^n d_i - \sum_{m=D+1}^{\tau_-} \delta_m - \sum_{m=D+1}^{\tau^+} \delta_m - 1$. As in the case $h=3$, the latter choice is always more convenient to make, whenever possible. This becomes significantly more evident when $d_4 \gg D$, as the following example shows.
\end{remark}

\begin{example} \label{Example (4,4,4,10;4)} Let $\Gamma \subseteq \PP^4$ be a complete intersection of degrees $\d=(4,4,4,10)$. By Remark \ref{Remark delta2}, if $X$ is a quartic passing through at least $532$ points of $\Gamma$, then $X$ contains all $640$ points of $\Gamma$. Notice that Theorem \ref{THM Pn} would give a value of $\delta(\d;4) = 612$, while the one predicted by the EGH conjecture would be $521$ points. 
\end{example}

\bibliographystyle{alpha}
\bibliography{References}
\end{document}